\definecolor{dark-gray}{gray}{0.3}
\definecolor{dkgray}{rgb}{.4,.4,.4}
\definecolor{dkblue}{rgb}{0,0,.5}
\definecolor{medblue}{rgb}{0,0,.75}
\definecolor{rust}{rgb}{0.5,0.1,0.1}
\newtheorem{theorem}{Theorem}[section]
\newtheorem{lemma}[theorem]{Lemma}
\newtheorem{proposition}[theorem]{Proposition}
\newtheorem{fact}[theorem]{Fact}
\theoremstyle{definition}
\newtheorem{example}[theorem]{Example}
\newtheorem{remark}[theorem]{Remark}
\newtheorem{assumption}[theorem]{Assumption}
\numberwithin{equation}{section} 
\numberwithin{figure}{section}
\numberwithin{table}{section}
\numberwithin{recipe}{section}
\providecommand{\mathbold}[1]{\bm{#1}}
\renewcommand{\phi}{\varphi}
\newcommand{\cnst}[1]{\mathrm{#1}} 
\newcommand{\econst}{\mathrm{e}}
\newcommand{\Id}{\mathbf{I}}
\providecommand{\mathbbm}{\mathbb} 
\newcommand{\R}{\mathbbm{R}}
\newcommand{\N}{\mathbbm{N}}
\newcommand{\Sym}{\mathbbm{H}}
\newcommand{\abs}[1]{\left\vert {#1} \right\vert}
\newcommand{\sgn}[1]{\operatorname{sgn}{#1}}
\newcommand{\diff}[1]{\mathrm{d}{#1}}
\newcommand{\idiff}[1]{\, \diff{#1}}
\newcommand{\grad}{\nabla} 
\newcommand{\Hess}{\operatorname{Hess}}
\newcommand{\Prob}[1]{\mathbb{P}\left\{{#1}\right\}}
\newcommand{\Expect}{\operatorname{\mathbb{E}}}
\newcommand{\E}{\Expect}
\DeclareMathOperator{\Var}{\bf{Var}}
\newcommand{\dirform}{\bm{\mathcal{E}}}
\stackMath \setstackgap{S}{1pt}
\newcommand{\vct}[1]{\mathbold{#1}}
\newcommand{\mtx}[1]{\mathbold{#1}}
\newcommand{\rank}{\operatorname{rank}}
\newcommand{\intdim}{\operatorname{intdim}}
\newcommand{\trace}{\operatorname{tr}}
\newcommand{\ntr}{\operatorname{\bar{\trace}}}
\newcommand{\psdle}{\preccurlyeq}
\newcommand{\psdge}{\succcurlyeq}
\newcommand{\ip}[2]{\left\langle {#1},\ {#2} \right\rangle}
\newcommand{\norm}[1]{\left\Vert {#1} \right\Vert}
\title[From Poincar{\'e} to Matrix Concentration]{From Poincar{\'e} Inequalities \\ to Nonlinear Matrix Concentration}
\author[D.~Huang and J.~A.~Tropp]{De Huang and Joel~A.~Tropp}
\date{26 April 2020.  Revised 11 June 2020 and 10 October 2020.}
\subjclass[2010]{Primary: 60B20, 46N30. Secondary: 60J25, 46L53.} 
\keywords{Concentration inequality; functional inequality; Markov process; matrix concentration; Poincar{\'e} inequality; semigroup.}
\begin{document}

\begin{abstract}
This paper deduces exponential matrix concentration
from a Poincar{\'e} inequality via a short, conceptual argument.
Among other examples, this theory applies to matrix-valued functions of a
uniformly log-concave random vector. The proof relies on the subadditivity of Poincar{\'e} inequalities and a
chain rule inequality for the trace of the matrix Dirichlet form.
It also uses a symmetrization technique to avoid difficulties associated
with a direct extension of the classic scalar argument.
\end{abstract}

\maketitle

\section{Introduction}

Matrix concentration inequalities describe the probability
that a random matrix is close to its expected value, with
deviations measured by the $\ell_2$ operator norm.  These results have
had a profound impact on a wide range of areas in computational mathematics
and statistics.  See the monograph~\cite{Tro15:Introduction-Matrix}
for an introduction to the subject and its applications.

As did the field of scalar concentration, matrix concentration theory
began with simple models, such as sums of independent
random matrices~\cite{LP86:Inegalites-Khintchine,Rud99:Random-Vectors,AW02:Strong-Converse,Tro12:User-Friendly}
and matrix-valued martingale sequences~\cite{PX97:Noncommutative-Martingale,Oli09:Concentration-Adjacency,Tro11:Freedmans-Inequality}.
In recent years, researchers have sought to develop results that hold for
a wider class of random matrix models.  Some initial successful efforts were based on
exchangeable pairs techniques~\cite{MJCFT14:Matrix-Concentration,PMT16:Efron-Stein-Inequalities},
but these methods do not address all cases of interest.

Researchers have also tried to extend scalar
concentration techniques based on functional inequalities. An early attempt, by Chen \& Tropp~\cite{CT14:Subadditivity-Matrix}, demonstrates
that (traces of) matrix variance and entropy quantities are subadditive, which 
leads to some Poincar{\'e} and modified log-Sobolev inequalities.  A number of authors,
including~\cite{CH16:Characterizations-Matrix,CHT17:Exponential-Decay,CH19:Matrix-Poincare},
have pursued this line of work.
Unfortunately, these approaches have not
been sufficient to reproduce all the results that have been established in the simpler models.

Very recently, Aoun et al.~\cite{ABY19:Matrix-Poincare} have
shown that a matrix form of the Poincar{\'e} inequality implies
subexponential concentration of a random matrix with respect to the $\ell_2$
operator norm.  We believe that this is the first instance
where a matrix functional inequality leads unconditionally
to a matrix concentration result (with respect to the operator norm).
Nevertheless, it remains an open question to deduce a full spectrum
of matrix concentration results from matrix functional inequalities.

In this paper, we improve on~\cite{ABY19:Matrix-Poincare}
by demonstrating that the ordinary scalar Poincar{\'e}
inequality also leads to subexponential concentration with respect
to the operator norm.
Our argument has some elements in common with the work in
\cite{ABY19:Matrix-Poincare}, but we have found a route to avoid
most of the technical difficulty of their approach.

The basic idea is to bound the trace of an odd function
(for example, the hyperbolic sine)
of the random matrix using a symmetrization argument.
The variance appears, and 
the Poincar{\'e} inequality
yields a bound on the variance in terms of the
Dirichlet form.  Last,
we apply a new matrix chain rule inequality
for the Dirichlet form to
obtain a moment comparison.
After this paper was written, we learned
that Bobkov \& Ledoux proposed
a similar argument in the scalar
case~\cite[Sec.~4]{BL97:Poincares-Inequalities}.

As in the scalar setting, Poincar{\'e} inequalities may not capture
the strongest concentration properties that are possible.  In a companion
paper~\cite{HT20:Matrix-Concentration}, we demonstrate that
\emph{local} Poincar{\'e} inequalities lead to the optimal
subgaussian concentration results.  The analysis in~\cite{HT20:Matrix-Concentration}
captures most of the previous results on matrix concentration,
but it involves more technical machinery.  We also refer the
reader to work of Junge \& Zeng~\cite{JZ15:Noncommutative-Martingale}
that contains similar results in the fully noncommutative setting.

An expert reader may still wonder about the role of (modified)
log-Sobolev inequalities in establishing matrix concentration
inequalities.  At the time of writing, it is not clear how
to obtain a matrix analog of the log-Sobolev inequality
that would imply matrix concentration results in the
same spirit as the ones in this paper or the related
works~\cite{JZ15:Noncommutative-Martingale,ABY19:Matrix-Poincare,HT20:Matrix-Concentration}.

\section{Main result}

This section summarizes our notation and the setting for our problem.
It highlights our primary result on matrix concentration, and it
gives a number of examples.  In the next section,
we comment on the relationship with previous work.

\subsection{Notation}

Let $\Sym_d$ be the real linear space of $d \times d$ self-adjoint complex matrices,
equipped with the $\ell_2$ operator norm $\norm{\cdot}$.
We work with both the ordinary trace, $\trace$, and the normalized trace, $\ntr := d^{-1} \trace$
on the space $\Sym_d$.
Matrices, and occasionally vectors, are written in boldface italic.
In particular, $\mtx{f}$ and $\mtx{g}$ refer to functions taking values in $\Sym_d$.
The cone $\Sym_d^+$ contains the positive semidefinite matrices,
and the symbol $\psdle$ refers to the semidefinite order.

Given a function $\phi : \R \to \R$ taking real values,
we extend it to a function $\phi : \Sym_d \to \Sym_d$
on self-adjoint matrices by means of the spectral resolution:
$$
\mtx{A} = \sum_{\lambda \in \mathrm{spec}(\mtx{A})} \lambda \, \mtx{P}_{\lambda} \in \Sym_d
\quad\text{implies}\quad
\phi(\mtx{A}) = \sum_{\lambda \in \mathrm{spec}(\mtx{A})} \phi(\lambda) \, \mtx{P}_{\lambda} \in \Sym_d.
$$
Whenever we apply a scalar function, such as a power
or a hyperbolic function, to a matrix, we are referring
to the standard matrix function.  Nonlinear functions
bind before the trace.

We use familiar notation from probability. The operator $\E$
returns the expectation, and $\Prob{\cdot}$ is the probability
of an event.  The symbol $\sim$ means ``has the distribution.''
Nonlinear functions bind before the expectation.

\subsection{Random matrices}

Let $\Omega$ be a Polish space, equipped with a probability measure $\mu$,
and write $\E_{\mu}$ for the integral with respect to the measure $\mu$.
Consider a $\mu$-integrable matrix-valued function $\mtx{f} : \Omega \to \Sym_d$ on the state space $\Omega$.
By drawing a random variable $Z \sim \mu$, we can construct a random matrix
$\mtx{f}(Z)$.  Our goal is to understand the concentration of $\mtx{f}(Z)$
around its mean $\E_{\mu} \mtx{f}$.

\begin{example}[Gaussians]
Consider the Gaussian space $(\R^n, \gamma_n)$ of $n$-dimensional
real vectors equipped with the standard normal measure $\gamma_n$.
Suppose we are interested in a matrix-valued function  $\mtx{f}(\vct{X})$
of a standard normal random vector $\vct{X} \sim \gamma_n$.
A familiar example~\cite[Chap.~5]{Tro15:Introduction-Matrix} is the matrix Gaussian series
\begin{equation} \label{eqn:gauss-series}
\mtx{f}(\vct{X}) = \sum_{i=1}^n X_i \mtx{A}_i
\quad\text{where $\mtx{X} \sim \gamma_n$ and $\mtx{A}_1, \dots, \mtx{A}_n \in \Sym_d$ are fixed.}
\end{equation}
We will use the Gaussian case as a running example to illustrate
the concepts that we introduce.
\end{example}

\subsection{Markov processes}

Suppose that we can identify an ergodic, reversible, time-homogeneous Markov process
$(Z_t : t \geq 0) \subset \Omega$ with initial
value $Z_0$ and stationary measure $\mu$.
This induces a matrix-valued Markov process
$( \mtx{f}(Z_t) : t \geq 0 ) \subset \Sym_d$.

By ergodicity, for any point $z \in \Omega$,
we have the limit $\E[ \mtx{f}(Z_t) \, | \, Z_0 = z ] \to \E_{\mu} \mtx{f}$
as $t \to \infty$.
The results in this paper build on
the intuition that a random matrix $\mtx{f}(Z)$
with $Z \sim \mu$ concentrates sharply about
its mean when the matrix-valued Markov process
tends quickly to equilibrium.

\begin{example}[Gaussians]
We can construct a reversible Markov process $(\vct{X}_t : t \geq 0) \subset \R^n$,
called the \emph{Ornstein--Uhlenbeck (OU) process},
by means of the stochastic differential equation
$$
\diff{\vct{X}}_t = - {\vct{X}}_t \idiff{t} + \sqrt{2} \idiff{\vct{B}}_t
\quad\text{with initial value $\vct{X}_0 \in \R^n$,}
$$
where $( \vct{B}_t : t \geq 0 ) \subset \R^n$ is random vector whose coordinates are
independent Brownian motions.  The stationary measure of
the OU process is the standard normal distribution $\gamma_n$.
\end{example}

\subsection{Derivatives and energy}

To understand how quickly a matrix Markov process $\mtx{f}(Z_t)$
converges to stationarity, we introduce notions
of the ``squared derivative'' and the ``energy'' of the function $\mtx{f}$.

Inspired by~\cite{ABY19:Matrix-Poincare}, 
we define the matrix \emph{carr{\'e} du champ operator} by the formula
\begin{equation} \label{eqn:carre-limit}
\mtx{\Gamma}(\mtx{f})(z) := \lim_{t \downarrow 0} \frac{1}{2t} \E \big[ \big(\mtx{f}(Z_t) - \mtx{f}(Z_0) \big)^2 \,\big|\, Z_0 = z \big] \in \Sym_d^+
\quad\text{for $z \in \Omega$.}
\end{equation}
In many instances, the carr{\'e} du champ $\mtx{\Gamma}(\mtx{f})$ has a natural interpretation
as a squared derivative of $\mtx{f}$.
The expectation of the carr{\'e} du champ is called the \emph{matrix Dirichlet form}:
\begin{equation} \label{eqn:dirichlet-limit}
\dirform(\mtx{f}) := \lim_{t \downarrow 0} \frac{1}{2t} \E_{Z \sim \mu} \big[ \big(\mtx{f}(Z_t) - \mtx{f}(Z_0)\big)^2 \,\big|\, Z_0 = Z \big] \in \Sym_d^+.
\end{equation}
The Dirichlet form $\dirform(\mtx{f})$ reflects the total energy of the function $\mtx{f}$.

In a general setting, it requires some care to make sense of the
definitions~\eqref{eqn:carre-limit} and~\eqref{eqn:dirichlet-limit}.
Without further comment, we restrict our attention to a ``nice'' class of functions
where the limit in~\eqref{eqn:carre-limit} exists pointwise and in $L_1(\mu)$
and where calculus operations are justified.
By approximation, our main results on concentration hold for a wider
class of functions.

\begin{example}[Gaussians]
According to~\cite[Prop.~5.5]{ABY19:Matrix-Poincare},
the matrix carr{\'e} du champ operator and matrix Dirichlet form of the OU process
are determined by
$$
\mtx{\Gamma}(\mtx{f})(\vct{x}) = \sum_{i=1}^n (\partial_i \mtx{f}(\vct{x}))^2
	\quad\text{for $\vct{x} \in \R^n$}
\quad\text{and}\quad
\dirform(\mtx{f}) = \sum_{i=1}^n \E_{\gamma_n} (\partial_i \mtx{f})^2.
$$
The interpretations as a squared derivative and an energy are evident,
and it is easy to check when the carr{\'e} du champ is defined.

The matrix Gaussian series~\eqref{eqn:gauss-series} provides
an illustration:
$$
\mtx{\Gamma}(\mtx{f}) = \dirform(\mtx{f}) = \sum_{i=1}^n \mtx{A}_i^2.
$$
This quantity is familiar from work on matrix concentration for Gaussian series~\cite[Chap.~4]{Tro15:Introduction-Matrix}.
\end{example}

\subsection{Trace Poincar{\'e} inequalities}

The \emph{matrix variance} of a function $\vct{f} : \Omega \to \Sym_d$
with respect to the distribution $\mu$ is defined as
\begin{equation}\label{eqn:matrix-var}
\Var_{\mu}[ \mtx{f} ] :=
	\E_{\mu}\big[ (\mtx{f} - \E_{\mu}\mtx{f} )^2 \big]
	= \E_{\mu}[ \vct{f}^2 ] - (\E_{\mu}  \vct{f} )^2
	\in \Sym_d^+.
\end{equation}
As in the scalar case, the variance reflects fluctuations
of the random matrix $\mtx{f}(Z)$ about its mean, where
the random variable $Z \sim \mu$.

We say that the Markov process satisfies a \emph{trace Poincar{\'e} inequality}
with constant $\alpha > 0$ if
\begin{equation} \label{eqn:trace-poincare}
\trace \Var_{\mu}[ \mtx{f} ]
	\leq \alpha \cdot \trace \dirform(\mtx{f})
	\quad\text{for all $\mtx{f} : \Omega \to \Sym_d$.} \end{equation}
In other words, the trace variance of $\mtx{f}(Z)$ is controlled
by the energy in the function $\mtx{f}$.  The inequality~\eqref{eqn:trace-poincare}
provides a way to quantify the ergodicity of the Markov process.

As it happens, the trace Poincar{\'e} inequality is equivalent to
an ordinary Poincar{\'e} inequality.  We are grateful to Ramon
Van Handel for this observation.  The same result has recently
appeared in the independent work of Garg et al.~\cite{GKS20:Scalar-Poincare}.

\begin{proposition}[Equivalence of Poincar{\'e} inequalities] \label{prop:equiv}
Consider a Markov process $(Z_t : t \geq 0) \subset \Omega$
with stationary measure $\mu$.  The following are equivalent:

\begin{enumerate}
\item	\textbf{Scalar Poincar{\'e}.}  For all $f : \Omega \to \R$, it holds that
$\Var_{\mu}[ f ] \leq \alpha \cdot \dirform( f )$.

\item	\textbf{Trace Poincar{\'e}.}  For all $d \in \N$ and all $\mtx{f} : \Omega \to \Sym_d$,
it holds that $\trace \Var_{\mu}[ \mtx{f} ] \leq \alpha \cdot \trace \dirform( \mtx{f})$.
\end{enumerate}

\noindent
The Poincar{\'e} constant $\alpha \geq 0$ is the same for both cases.
\end{proposition}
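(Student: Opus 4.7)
My plan is to prove the substantive direction (1) $\Rightarrow$ (2); the converse is immediate, by taking $d=1$ and applying (2) to a real scalar function. The strategy for the forward direction is to reduce the matrix inequality to the scalar Poincar{\'e} inequality applied entrywise.

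First, I would expand both traces componentwise. Write $\mtx{f} = [f_{ij}]$ where each $f_{ij}: \Omega \to \C$ and $f_{ji} = \overline{f_{ij}}$, since $\mtx{f}$ is self-adjoint. The key identity is that, for any self-adjoint $\mtx{A} = [a_{ij}]$, one has $\trace \mtx{A}^2 = \sum_{i,j} |a_{ij}|^2$. Applying this to the self-adjoint matrices $\mtx{f} - \E_\mu \mtx{f}$ and $\mtx{f}(Z_t) - \mtx{f}(Z_0)$, taking expectations and passing $t \downarrow 0$, produces
\begin{align}
\trace \Var_\mu[\mtx{f}] &= \sum_{i,j} \E_\mu \big|f_{ij} - \E_\mu f_{ij}\big|^2, \\
\trace \dirform(\mtx{f}) &= \sum_{i,j} \lim_{t \downarrow 0} \tfrac{1}{2t}\, \E\big[\,|f_{ij}(Z_t) - f_{ij}(Z_0)|^2\,\big].
\end{align}
Each summand on the right is a scalar variance, respectively a scalar Dirichlet form, of the complex-valued entry $f_{ij}$.

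Next I would observe that the scalar Poincar{\'e} inequality in (1), stated for real-valued functions, extends to complex-valued functions with the same constant $\alpha$: decomposing $f_{ij} = u_{ij} + \iunit v_{ij}$ with $u_{ij}, v_{ij}$ real, both the scalar variance and the scalar Dirichlet form split additively into contributions from $u_{ij}$ and $v_{ij}$. Applying the hypothesis to each real component and adding yields $\Var_\mu[f_{ij}] \leq \alpha\cdot \dirform(f_{ij})$. Summing this bound over all entries $i,j$ and invoking the two identities above closes the argument, with the same constant $\alpha$ carried from (1) to (2).

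There is no substantive obstacle here. The entire argument rides on the identity $\trace \mtx{A}^2 = \sum_{i,j}|a_{ij}|^2$ for self-adjoint $\mtx{A}$, which decouples the matrix trace variance and the matrix trace Dirichlet form into a sum of scalar quantities attached to the entries of $\mtx{f}$. The only modest point of care is the complex extension of the scalar Poincar{\'e} inequality, which is routine via the real-imaginary decomposition; the appropriate regularity of the entries $f_{ij}$ is inherited from the ``nice'' function class already stipulated for $\mtx{f}$.
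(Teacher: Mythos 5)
Your proof is correct, and the converse direction (take $d=1$) matches the paper. For the substantive direction your route differs in its decomposition from the paper's: you expand the trace entrywise via the Hilbert--Schmidt identity $\trace \mtx{A}^2 = \sum_{i,j} \abs{a_{ij}}^2$, apply the (complexified) scalar Poincar{\'e} inequality to each entry $f_{ij}$, and sum; the paper instead applies the scalar inequality to the bilinear functionals $g(z) = \ip{\vct{u}}{\mtx{f}(z)\,\mathbf{e}_i}$, sums over $i$ to obtain the quadratic-form inequality $\ip{\vct{u}}{\Var_{\mu}[\mtx{f}]\,\vct{u}} \leq \alpha \cdot \ip{\vct{u}}{\dirform(\mtx{f})\,\vct{u}}$ for every fixed $\vct{u}$, and only then averages over $\vct{u} \sim \uniform\{\pm 1\}^d$ to extract the trace. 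Your argument is arguably more direct for the trace statement and handles complex matrices cleanly through the entries' real and imaginary parts, with the same constant $\alpha$. What the paper's route buys is a strictly stronger intermediate conclusion: since its quadratic-form bound holds for all $\vct{u}$, it establishes the semidefinite matrix Poincar{\'e} inequality $\Var_{\mu}[\mtx{f}] \psdle \alpha \cdot \dirform(\mtx{f})$, a fact the paper invokes later (in the discussion of related work) to observe that the matrix Poincar{\'e} inequality of Aoun et al.\ is itself equivalent to the scalar one; the entrywise decomposition cannot see this, because it only controls the diagonal sum. One small point of care in your write-up: the interchange of the finite sum over $(i,j)$ with the limit $t \downarrow 0$ defining the entrywise Dirichlet forms deserves a word (for reversible processes each entrywise quotient is monotone in $t$, or one simply appeals to the restricted ``nice'' function class), but this is at the same level of rigor as the paper's own treatment.
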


\begin{proof}
It is evident that the validity of the trace Poincar{\'e} inequality for all $d \in \N$
implies the scalar Poincar{\'e} inequality.  For the reverse implication, it suffices to
consider a real matrix-valued function $\mtx{f} : \Omega \to \Sym_d(\R)$ with zero mean.
For vectors $\vct{u}, \vct{v} \in \R^d$, define the scalar function
$g(z) = \ip{ \vct{u} }{ \mtx{f}(z) \, \vct{v} } \in \R$.  Apply the scalar
Poincar{\'e} inequality to $g$ and invoke the definition~\eqref{eqn:dirichlet-limit}
of the Dirichlet form.  Thus,
\begin{align*}
\E_{\mu} \ip{ \vct{u} }{ \mtx{f}(z) \, \vct{v} }^2
	\leq \alpha \cdot \lim_{t \downarrow 0} \frac{1}{2t} \E_{Z \sim \mu} \big[
	\ip{ \vct{u} }{ (\mtx{f}(Z_t) - \mtx{f}(Z_0)) \, \vct{v} }^2 \, \big\vert\, Z_0 = Z \big].
\end{align*}
Instate this inequality with $\vct{v} = \mathbf{e}_i$ for each $i = 1, \dots, d$
and sum over $i$ to arrive at
$$
\ip{\vct{u}}{ \smash{\Var_{\mu}[ \mtx{f} ]} \, \vct{u}}
	\leq \ip{ \vct{u} }{ \dirform(\mtx{f}) \, \vct{u} }.
$$
Average over $\vct{u} \sim \textsc{uniform}\{\pm 1\}^d$ to reach the trace Poincar{\'e}
inequality.  To extend this argument to complex matrices, apply the same approach
to the real and imaginary parts of the inner product.
\end{proof}

The main result of this paper is that concentration properties
of the random matrix $\mtx{f}(Z)$ follow from the
trace Poincar{\'e} inequality~\eqref{eqn:trace-poincare}
or, equivalently, the scalar Poincar{\'e} inequality.

\begin{example}[Gaussians]
It is well known that the OU process satisfies the Poincar{\'e}
inequality with constant $\alpha = 1$.  Thus, it satisfies
the trace Poincar{\'e} inequality~\eqref{eqn:trace-poincare}
with $\alpha = 1$.  For an alternative proof, see~\cite[Thm.~1.2]{ABY19:Matrix-Poincare}.
\end{example}

\subsection{Subexponential concentration and expectation bounds}

We are now prepared to present our main result.
It demands several hypotheses,
which will be enforced throughout the paper.

\begin{assumption}[Conditions] \label{ass:main}
We assume that
\begin{enumerate}
\item	The Markov process $(Z_t : t \geq 0) \subset \Omega$ is reversible
and homogeneous, with initial value $Z_0$ and stationary measure $\mu$.
\item	The process admits a trace Poincar{\'e} inequality~\eqref{eqn:trace-poincare}
with constant $\alpha$.  Equivalently, the process admits a scalar Poincar{\'e}
inequality with the same constant $\alpha$.

\item	The class of valid functions is suitably restricted so that manipulations of
expectations, limits, and derivatives are justified.
\end{enumerate}
\end{assumption}

Under Assumption~\ref{ass:main},
we will deduce 
subexponential concentration of the random matrix $\mtx{f}(Z)$
around its mean $\E_{\mu} \mtx{f}$, where we measure the size of deviations
with the $\ell_2$ operator norm $\norm{\cdot}$.  

\begin{theorem}[Subexponential Concentration] \label{thm:main}
Enforce Assumption~\ref{ass:main}.
Let $\mtx{f} : \Omega \to \Sym_d$ be a matrix-valued function, and define the variance proxy
\begin{equation*} \label{eqn:vf}
v_{\mtx{f}} := \norm{ \norm{ \mtx{\Gamma}(\mtx{f})(z) } }_{L_{\infty}(\mu)}.
\end{equation*}
For all $\lambda > 0$,
\begin{equation} \label{eqn:main-tail}
\mathbbm{P}_{\mu}\big\{ \norm{ \smash{\mtx{f} - \Expect_{\mu} \mtx{f}} } \geq \sqrt{\alpha v_{\mtx{f}}} \cdot \lambda  \big\}
	\leq 6 d \cdot \econst^{- \lambda}.
	\end{equation}
In particular,
\begin{equation} \label{eqn:main-expect}
\E_{\mu} \norm{ \smash{ \mtx{f} - \Expect_{\mu} \mtx{f} } }
	\leq \log( 6 \econst d ) \cdot \sqrt{\alpha v_{\mtx{f}}}.
	\end{equation}
\end{theorem}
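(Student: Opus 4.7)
The plan is to control the trace exponential moment
$$m(\theta) := \E_\mu \trace \cosh(\theta (\mtx{f} - \E_\mu \mtx{f})) \qquad\text{for } \theta \geq 0,$$
on an interval $\theta \in [0,\, c/\sqrt{\alpha v_{\mtx{f}}})$, and convert this into a tail bound via a trace Markov inequality. Centering so that $\E_\mu \mtx{f} = \zeromtx$, the two-sided Chernoff estimate
$\Prob{\norm{\mtx{f}} \geq t} \leq 2\, m(\theta)\, \econst^{-\theta t}$ (obtained by applying the standard one-sided Laplace transform bound to $\pm \mtx{f}$ and summing) turns a uniform bound $m(\theta) \leq Cd$ on such an interval into the subexponential tail~\eqref{eqn:main-tail}; integrating the tail recovers~\eqref{eqn:main-expect}, with constants matching the stated $\log(6\econst d)$.

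The core of the argument is a self-improving estimate on $m(\theta)$ obtained by symmetrization. Let $Z'$ be an independent copy of $Z \sim \mu$, and set $\mtx{F} := \mtx{f}(Z) - \mtx{f}(Z')$. Because $\mtx{f}$ is centered, $\mtx{f}(Z) = \E_{Z'} \mtx{F}$, and Jensen's inequality applied to the convex function $\mtx{A} \mapsto \trace \cosh \mtx{A}$ on $\Sym_d$ yields $m(\theta) \leq \E \trace \cosh(\theta \mtx{F})$. Crucially $\mtx{F} \stackrel{d}{=} -\mtx{F}$, so $\mtx{h} := \sinh(\theta \mtx{F}/2)$ is an odd function of a symmetric random matrix and therefore has mean zero. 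Combining the identity $\cosh(\theta \mtx{F}) - \Id = 2 \sinh^2(\theta \mtx{F}/2)$ with the trace Poincar{\'e} inequality for the product chain $(Z_t,Z_t')$ (which inherits the Poincar{\'e} constant $\alpha$ by tensorization, combined with Proposition~\ref{prop:equiv}), gives
$$\E \trace \cosh(\theta \mtx{F}) - d \;=\; 2 \E \trace \mtx{h}^2 \;=\; 2 \trace \Var[\mtx{h}] \;\leq\; 2 \alpha\, \trace \dirform(\mtx{h}).$$

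The matrix chain rule inequality then bounds $\trace \dirform(\mtx{h})$ by $(\theta/2)^2 \E \trace[\cosh^2(\theta \mtx{F}/2) \cdot (\mtx{\Gamma}(\mtx{f})(Z) + \mtx{\Gamma}(\mtx{f})(Z'))]$. Using $\norm{\mtx{\Gamma}(\mtx{f})} \leq v_{\mtx{f}}$ pointwise (and the standard estimate $\trace[\mtx{A}\mtx{B}] \leq \norm{\mtx{B}} \trace \mtx{A}$ for PSD $\mtx{A},\mtx{B}$) together with the identity $\cosh^2(x) = (1 + \cosh(2x))/2$, the right side collapses to a constant multiple of $v_{\mtx{f}}\,(d + \E \trace \cosh(\theta \mtx{F}))$. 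Substituting produces an estimate of the form
$$\E \trace \cosh(\theta \mtx{F})\,(1 - c_1 \alpha v_{\mtx{f}} \theta^2) \;\leq\; d\,(1 + c_1 \alpha v_{\mtx{f}} \theta^2),$$
which, combined with the earlier Jensen inequality, yields $m(\theta) \leq Cd$ on $\theta \in [0, c/\sqrt{\alpha v_{\mtx{f}}})$, closing the argument.

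The main obstacle is the chain rule inequality. In the scalar setting $\dirform(\phi(f)) = \E[\phi'(f)^2 \Gamma(f)]$ follows immediately from the one-variable chain rule, but in the matrix setting $\phi(\mtx{f})$ is a spectral function whose variation along the Markov path is governed by a divided-difference (Daleckii--Krein type) operator rather than by $\phi'$ at a single argument, and $\phi'(\mtx{f})$ generically fails to commute with $\mtx{\Gamma}(\mtx{f})$. An operator inequality of the shape $\mtx{\Gamma}(\phi(\mtx{f})) \psdle \phi'(\mtx{f})\,\mtx{\Gamma}(\mtx{f})\,\phi'(\mtx{f})$ cannot be expected in general; only the weaker trace version $\trace \dirform(\phi(\mtx{f})) \leq \E \trace[\phi'(\mtx{f})^2 \mtx{\Gamma}(\mtx{f})]$ should hold, and proving it for $\phi = \sinh$ by exploiting the positivity and convexity of the divided-difference kernel of $\sinh$ together with cyclic invariance of the trace is, I expect, the principal technical step of the paper.
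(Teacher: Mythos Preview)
Your proposal is correct and mirrors the paper's argument essentially step for step: symmetrize via an independent copy, exploit the oddness of $\sinh$ to convert the second moment into a variance, apply the trace Poincar\'e inequality on the product (Proposition~\ref{prop:poincare-subadd}), then the chain rule inequality (Proposition~\ref{prop:chain-rule}), and close with the hyperbolic identity and a self-referential moment comparison; the paper merely organizes the scaling differently, working with $\sinh^2(\theta\mtx{g})$ throughout and deferring the substitution $\theta\mapsto\theta/2$ to the end. Regarding your speculation about the chain rule, the paper's mechanism (Lemma~\ref{lem:mvti}) is close to but simpler than a Daleckii--Krein argument: it lifts the scalar mean-value inequality $(\phi(a)-\phi(b))^2\le\tfrac12(a-b)^2(\psi(a)+\psi(b))$, valid whenever $\psi=(\phi')^2$ is convex, to matrices via the generalized Klein inequality, and then uses reversibility (exchangeability of $(Z_0,Z_t)$ when $Z_0\sim\mu$) to symmetrize the resulting expression inside the Dirichlet form.
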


\noindent
The proof of Theorem~\ref{thm:main} appears in Section~\ref{sec:exp-moments}
after we present some more background on matrix-valued Markov processes.
Note that we have made no effort to refine constants.

The main point of the tail bound~\eqref{eqn:main-tail} is that
the random matrix $\mtx{f}(Z)$ exhibits exponential concentration on the scale $\sqrt{\alpha v_{\mtx{f}}}$.
The variance proxy $v_{\mtx{f}}$ is analogous to a global bound on
the Lipschitz constant of $\mtx{f}$.
Be aware that we cannot achieve tail decay faster than exponential
under the sole assumption of a Poincar{\'e} inequality, so this
approach may not capture the strongest possible concentration.
The leading constant in~\eqref{eqn:main-tail} reflects
the ambient dimension $d$ of the matrix;
this feature is typical of matrix concentration bounds.

The expectation bound~\eqref{eqn:main-expect} shows
that the average value of $\norm{ \smash{\mtx{f} - \E_{\mu} \mtx{f}} }$ is proportional to the
square root $\sqrt{v_{\vct{f}}}$ of the variance proxy and to the logarithm of the ambient dimension.
For many examples, the optimal bound contains the \emph{square-root} of the logarithm,
but the result~\eqref{eqn:main-expect} is nontrivial and informative.

\begin{example}[Gaussian Series]
Theorem~\ref{thm:main} applies to a matrix-valued function of a standard normal vector.
For instance, according to~\eqref{eqn:main-expect}, a matrix Gaussian series satisfies the expectation bound
$$
\E \norm{ \sum_{i=1}^n X_i \mtx{A}_i } \leq \log(6 \econst d) \cdot \norm{ \sum_{i=1}^n \mtx{A}_i^2 }^{1/2}.
$$
Modulo the constant and the power on the logarithm, this bound is qualitatively correct for worst-case examples. On the other hand, the subexponential tail bound~\eqref{eqn:main-tail} does not reproduce the actual
subgaussian behavior.  See~\cite[Chap.~5]{Tro15:Introduction-Matrix} for discussion.
Example~\ref{ex:chaos} describes an application to Gaussian chaos that requires
tools that are more delicate than Theorem~\ref{thm:main}.
\end{example}

\begin{remark}[Extensions]
The bounds in Theorem~\ref{thm:main} can be refined in several ways.
We can replace the variance proxy $v_{\mtx{f}}$ with less stringent measures of the size of the carr{\'e}
du champ $\mtx{\Gamma}(\mtx{f})$, such as the expected Schatten $q$-norm with $q \approx \log d$. 
It is also possible to replace the ambient dimension $d$ with a measure
of the intrinsic dimension of the random matrix $\mtx{f}(Z)$. 
See Section~\ref{sec:poly-moments}.
\end{remark}

\begin{remark}[Rectangular case]
By a standard formal argument, we can extend all the results here to
a function $\mtx{h}: \Omega \to \mathbb{M}_{d_1 \times d_2}$ that takes
values in the $d_1 \times d_2$ complex matrices.
To do so, we simply apply our results to the self-adjoint function
$$
\mtx{f}(z) = \begin{bmatrix} \mtx{0} & \mtx{h}(z) \\ \mtx{h}(z)^* & \mtx{0} \end{bmatrix}
	\in \Sym_{d_1 + d_2}
\quad\text{for $z \in \Omega$.}
$$
See~\cite[Sec.~2.1.17]{Tro15:Introduction-Matrix} for details.
\end{remark}

\subsection{Examples}

To indicate the scope of Theorem~\ref{thm:main},
let us present some more examples.  Most of these
examples are actually known to exhibit subgaussian
matrix concentration, but there is
at least one case (Section~\ref{sec:scp})
where the results here are currently the best available.

\subsubsection{Log-concave measures}

The Gaussian case is a particular example of a more general result for log-concave measures.
Suppose that $J : \R^n \to \R$ is a strongly convex function that satisfies
$\Hess J \psdge \eta \Id$ uniformly.  Construct the probability
measure $\mu$ on $\R^n$ whose density is proportional to $\econst^{-J}$.
In this example, we briefly discuss concentration of matrix-valued functions
$\mtx{f}(\vct{X})$ where $\vct{X} \sim \mu$.  This model is interesting
because it captures a type of negative dependence.

The appropriate Markov process $(\vct{X}_t : t \geq 0)$ evolves with
the stochastic differential equation
$$
\diff{\vct{X}}_t = - \grad J(\vct{X}_t) \idiff{t} + \sqrt{2} \idiff{\vct{B}}_t
\quad\text{with initial value $\vct{X}_0 \in \R^n$,}
$$
where $( \vct{B}_t : t \geq 0 ) \subset \R^n$ is Brownian motion.
The stationary distribution is $\mu$, and the matrix carr{\'e} du champ is $$
\mtx{\Gamma}(\mtx{f})(\vct{x}) = \sum_{i=1}^n (\partial_i \mtx{f}(\vct{x}))^2
	\quad\text{for $\vct{x} \in \R^n$.}
$$
It is well known that these diffusions satisfy a Poincar{\'e} inequality
with constant $\alpha = 1/\eta$; see~\cite[Cor.~4.8.2]{BGL14:Analysis-Geometry}.
Therefore, Theorem~\ref{thm:main} applies.

\textit{A fortiori}, these log-concave measures also satisfy a local Poincar{\'e} inequality,
which leads to subgaussian matrix concentration~\cite[Sec.~2.12.2]{HT20:Matrix-Concentration}.

\subsubsection{Riemannian manifolds with positive curvature}

More generally, let $(M, \mathfrak{g})$ be a compact Riemannian manifold with co-metric $\mathfrak{g}$.
The manifold carries a canonical Riemannian probability measure $\mu_\mathfrak{g}$.
The diffusion whose infinitesimal generator is the Laplace--Beltrami operator
$\Delta_{\mathfrak{g}}$ is called the Brownian motion on the manifold.  This
is a reversible, ergodic Markov process.  Its matrix carr{\'e} du champ takes
the form
\begin{equation} \label{eqn:manifold-gamma}
\mtx{\Gamma}(\mtx{f})(z) = \sum_{i,j} g^{ij}(z) \, (\partial_i \mtx{f}(z)) \, (\partial_j \mtx{f}(z))
\quad\text{for $\mtx{f} : \Omega \to \Sym_d$.}
\end{equation}
The co-metric $g$ and the partial derivatives $\partial_i \mtx{f}$ are computed
with respect to local coordinates.  See~\cite{BGL14:Analysis-Geometry} for
an introduction to diffusions on manifolds; the companion paper~\cite{HT20:Matrix-Concentration}
treats matrix-valued functions on manifolds.

Suppose that the manifold has uniformly positive Ricci curvature, where the Ricci tensor
has eigenvalues bounded below by $\rho$.  By now, it is a classic fact that the
Brownian motion on this manifold satisfies the scalar Poincar{\'e} inequality
with constant $\alpha = \rho^{-1}$.  See~\cite[Sec.~4.8]{BGL14:Analysis-Geometry}.

\begin{example}[Sphere]
For $n \geq 2$, the unit sphere $\mathbb{S}^{n} \subset \R^{n+1}$ is a Riemannian submanifold
of $\R^{n+1}$.  Its canonical measure is the uniform probability distribution,
and the carr{\'e} du champ of the Brownian motion on the sphere
is computed using~\eqref{eqn:manifold-gamma}.
The sphere has positive Ricci curvature with $\rho = n - 1$,
so it admits a Poincar{\'e} inequality with $\alpha = (n-1)^{-1}$.
Thus, matrix-valued functions on the sphere satisfy exponential matrix
concentration.
\end{example}

A similar story can be told about every positively curved manifold.
In fact, in this setting, we even have subgaussian matrix concentration
because of the stronger arguments in~\cite{HT20:Matrix-Concentration}.

\subsubsection{Products}

Consider a probability space $(\Omega, \mu)$.  It is common to work with multivariate
functions defined on the product space $(\Omega^n, \mu^{\otimes n})$.  There is
a standard construction~\cite[Sec.~2.3.2]{VH16:Probability-High} of a
Markov process on the product space.  Aoun et al.~\cite{ABY19:Matrix-Poincare}
verify that the (matrix) carr{\'e} du champ of this process is
$$
\mtx{\Gamma}(\mtx{f})(\vct{z})
	= \frac{1}{2} \sum_{i=1}^n \E_{Z \sim \mu} \big[ \big( \mtx{f}(z_1, \dots, z_n) - \mtx{f}(z_1, \dots, z_{i-1}, Z,  z_{i+1}, \dots, z_n) \big)^2 \big].
$$
This is the sum of squared discrete derivatives, each averaging over perturbations in a single coordinate.
The variance proxy $v_{\mtx{f}}$ takes the form
$$
v_{\mtx{f}} = \operatorname{ess\,sup}\nolimits_{\mtx{z} \in \Omega^n} \frac{1}{2} \norm{ \sum_{i=1}^n \E_{Z \sim \mu} \big[ \big( \mtx{f}(z_1, \dots, z_n) - \mtx{f}(z_1, \dots, z_{i-1}, Z,  z_{i+1}, \dots, z_n) \big)^2 \big] }
$$
The variance proxy coincides with the matrix bounded difference that arises in Paulin et al.~\cite{PMT16:Efron-Stein-Inequalities}.
Aoun et al.~prove that the Markov process satisfies a trace Poincar{\'e} inequality~\eqref{eqn:trace-poincare}
with constant $\alpha = 1$.  Therefore, Theorem~\ref{thm:main} yields a suboptimal version
of the exponential matrix Efron--Stein inequality~\cite[Thm.~4.3]{PMT16:Efron-Stein-Inequalities}.
See~\cite{ABY19:Matrix-Poincare,HT20:Matrix-Concentration} for more details.

\begin{remark}[Bernstein concentration?]
In the scalar case, Bobkov \& Ledoux~\cite[Cor.~3.2]{BL97:Poincares-Inequalities} have
shown that a function $f : \Omega^n \to \R$ on a product space $(\Omega^n, \mu^{\otimes n})$
exhibits Bernstein-type concentration when $\mu$ admits a Poincar{\'e} inequality  with constant $\alpha$.
In detail,
$$
\Prob{ \abs{f  - \E f} > t } \leq \exp\left( \frac{-\mathrm{c} t^2}{v + Bt} \right)
\quad\text{with}\quad
v = \norm{ \sum_{i=1}^n (\partial_i f)^2 }_{L_{\infty}} \quad\text{and}\quad
B = \max\nolimits_i \norm{ \partial_i f }_{L_{\infty}}.
$$
The constant $\cnst{c}$ depends on the constant $\alpha$ in the Poincar{\'e} inequality.
Our approach does not seem powerful enough to transfer this insight to the matrix setting.
Nevertheless, our paper~\cite{HT20:Matrix-Concentration} demonstrates
that a \emph{local} Poincar{\'e} inequality is sufficient to achieve
Bernstein concentration.
\end{remark}

\subsubsection{Stochastic covering property}
\label{sec:scp}

Aoun et al.~\cite{ABY19:Matrix-Poincare} have considered a model for negatively
dependent functions on the hypercube $\{0, 1\}^n$, namely the
class of measures with the \emph{stochastic covering property} (SCP).
For a $k$-homogeneous measure $\mu$ with the SCP, it is possible to construct a
Markov process that satisfies the trace Poincar{\'e} inequality~\eqref{eqn:trace-poincare}
with constant $2k$.  Thus, Theorem~\ref{thm:main} applies.
See~\cite{PP14:Concentration-Lipschitz,HS19:Modified-Log-Sobolev,ABY19:Matrix-Poincare}
for a more complete discussion of this example.

\begin{remark}[Subgaussian concentration?]
Although the Markov process associated with an SCP measure
satisfies a log-Sobolev inequality, we do not know
if it satisfies the local Poincar{\'e} inequality that we would need to activate
the subgaussian concentration inequalities in~\cite{HT20:Matrix-Concentration}.
\end{remark}

\subsubsection{Expander graphs}

Let $G = (\Omega, E)$ be a $k$-regular, connected, undirected graph.
We can construct a Markov process $(Z_t : t \geq 0) \subset \Omega$
by taking a continuous-time random walk on the vertex set $\Omega$.
The stationary measure $\mu$ of the random walk is the uniform
measure on $\Omega$.  The carr{\'e} du champ operator takes the form
$$
\mtx{\Gamma}(\mtx{f})(z) = \frac{1}{2k} \sum_{(z',z) \in E} \big(\mtx{f}(z') - \mtx{f}(z)\big)^2.
$$
In other words, the ``squared gradient'' is just the half the average squared
difference between the matrix at the current vertex and its $k$ neighbors.
$$
v_{\mtx{f}} := \max_{z \in \Omega} \norm{ \frac{1}{2k} \sum_{(z',z)\in E} \big(\mtx{f}(z') - \mtx{f}(z) \big)^2 }.
$$
The variance proxy is just the maximum ``squared gradient''
at any vertex.

Assume that the Markov process satisfies the (scalar) Poincar{\'e}
inequality, Proposition~\ref{prop:equiv}(1), with constant $\alpha$.
In this case, we say that $G$ is an $\alpha$-expander graph.
According to Theorem~\ref{thm:main}, a matrix-valued function
$\mtx{f} : \Omega \to \Sym_d$ on an $\alpha$-expander graph satisfies a
subexponential matrix concentration inequality:
$$
\Prob{ \norm{ \smash{\mtx{f} - \E_\mu \mtx{f}} }\geq \sqrt{\alpha v_{\mtx{f}}} \cdot \lambda }
	\leq 6d \cdot \econst^{-\lambda}.
$$
Therefore, local control over the fluctuations yields global concentration around the mean.
The number of vertices where the function departs from its mean is controlled by
the quality $\alpha$ of the expander.

This example is potentially interesting because the Markov process
does not satisfy a dimension-independent (modified)
log-Sobolev inequality~\cite[Sec.~5]{BT06:Modified-Logarithmic}.
Indeed, to achieve the functional inequality
$$
\mathrm{Ent}_{\mu}(f) \leq
	\beta \cdot \mathcal{E}(f, \log f)
	\quad\text{for all $f : \Omega \to \R$},
$$
it is necessary that $\beta \geq \mathrm{const} \cdot \log( \# \Omega )$.
See~\cite[Sec.~4.3]{Mun19:Li-Yau-Inequality} for related
results on curvature-dimension conditions of Bakry--{\'E}mery type.

\begin{remark}[Matrix Expander Chernoff]
Although the modified log-Sobolev inequality fails,
it is still possible to establish subgaussian concentration
for the ergodic averages of a matrix-valued random walk
on an expander graph~\cite{GLSS18:Matrix-Expander}.
\end{remark}

\section{Related work}

\subsection{Markov processes}

Much of the classical research on Markov processes concerns the relationship between
the geometry of the state space and the behavior of
canonical diffusion processes (e.g., Brownian motion
on a Riemannian manifold).
For an introduction, we recommend
the lecture notes~\cite{VH16:Probability-High}.  A more comprehensive
source is the treatise~\cite{BGL14:Analysis-Geometry}.  

Matrix-valued Markov processes first arose in the mathematical
physics literature as a model for the evolution of a quantum system.
Some of the foundational works include Davies~\cite{Dav69:Quantum-Stochastic}
and Lindblad~\cite{Lin76:Generators-Quantum}.
Quantum information theory has provided a new impetus
for studying matrix-valued Markov processes;
see~\cite{KT13:Quantum-Logarithmic}
for a discussion and some background references.

Here, we are interested in a mixed classical-quantum
setting, where a classical Markov process drives
a matrix-valued function.  Surprisingly, this model
does not seem to have received much attention
until the last few years.
See Cheng et al.~\cite{CHT17:Exponential-Decay}
for a more expansive framework that includes this case.
Other foundational results appear
in~\cite{ABY19:Matrix-Poincare,HT20:Matrix-Concentration}.

\subsection{Functional inequalities}

In the scalar setting, the connection between functional inequalities,
convergence of Markov processes, and concentration
is a long-standing topic of research.  References
include~\cite{Led01:Concentration-Measure,BLM13:Concentration-Inequalities,
BGL14:Analysis-Geometry,VH16:Probability-High}.

Functional inequalities for matrices were
originally formulated in the mathematical physics literature;
for example, see the work of Gross~\cite{Gro75:Hypercontractivity-Logarithmic}.
The application of functional inequalities to the
ergodicity of matrix-valued Markov processes dates
back at least as far as the
papers~\cite{MOZ98:Dissipative-Dynamics,OZ99:Hypercontractivity-Noncommutative}.

Functional inequalities in the mixed classical-quantum setting
seem to have a more recent vintage.  Chen \& Tropp~\cite{CT14:Subadditivity-Matrix}
formulated subadditivity properties for tracial entropy-like
quantities, including the trace variance~\eqref{eqn:matrix-var}.
They showed that these properties imply some
Sobolev and modified log-Sobolev-type inequalities for random matrices,
and they obtained some restricted matrix concentration inequalities.
Some of the partial results from~\cite{CT14:Subadditivity-Matrix}
were completed in~\cite{HZ15:Characterization-Matrix,PV15:Joint-Convexity,CH16:Characterizations-Matrix}.

Cheng et al.~\cite{CHT17:Exponential-Decay} developed a framework for studying Markov
processes in the mixed classical-quantum setting (and beyond),
and they showed an equivalence between tracial log-Sobolev
inequalities and exponential ergodicity of the trace entropy.
Further results and implications for concentration appear in~\cite{CH19:Matrix-Poincare}.
At present, we do not have a full picture of the relationships
between matrix functional inequalities and matrix concentration.

Van Handel (personal communication) has pointed out that we can derive
nonlinear matrix concentration for functions of a log-concave measure
with a strongly convex potential by combining Pisier's method~\cite{Pis86:Probabilistic-Methods}, the (sharp)
noncommutative Khintchine inequality~\cite{Buc01:Operator-Khintchine,Tro16:Expected-Norm},
and Caffarelli's contraction theorem~\cite{Caf00:Monotonicity-Properties}.
This approach gives subgaussian concentration, which is better than we can obtain via
the trace Poincar{\'e} inequality for log-concave measures, but
it apparently does not extend beyond this setting.

\subsection{From Poincar{\'e} to concentration}

It has been recognized for about 40 years that Poincar{\'e} inequalities
imply exponential concentration. Gromov \& Milman~\cite[Thm.~4.1]{GM83:Topological-Application}
prove such a theorem in the context of Riemannian manifolds.
The standard argument, a recursive estimate of the moment generating function, is attributed
to Aida \& Stroock~\cite{AS94:Moment-Estimates}.  For a textbook presentation,
see~\cite[Sec.~3.6]{BLM13:Concentration-Inequalities}.

Consider a matrix Poincar{\'e} inequality of the form
\begin{equation} \label{eqn:matrix-poincare}
\Var_{\mu}[\mtx{f}]	
\psdle \alpha \cdot \dirform(\mtx{f}).
\end{equation}
The argument in Proposition~\ref{prop:equiv} shows that
this matrix Poincar{\'e} inequality~\eqref{eqn:matrix-poincare}
is also equivalent to a scalar Poincar{\'e} inequality
with the same constant $\alpha$.
The papers~\cite{CH16:Characterizations-Matrix,CHT17:Exponential-Decay,CH19:Matrix-Poincare}
demonstrate that~\eqref{eqn:matrix-poincare} leads to some inequalities for
the matrix variance~\eqref{eqn:matrix-var} and its trace,
but these approaches do not lead to matrix concentration
inequalities like Theorem~\ref{thm:main}.

Aoun, Banna, and Youssef~\cite{ABY19:Matrix-Poincare} have recently
shown that the matrix Poincar{\'e} inequality~\eqref{eqn:matrix-poincare}
does imply exponential concentration of a random matrix about
its mean with respect to the $\ell_2$ operator norm.
Modulo constants, their result is equivalent with the tail bound~\eqref{eqn:main-tail},
but it is weaker than the bounds in Theorem~\ref{thm:poly-moment}.
The proof in~\cite{ABY19:Matrix-Poincare} is a direct analog of the argument of Aida \& Stroock~\cite{AS94:Moment-Estimates}.
But, in the matrix setting, the recursive estimate requires some heavy lifting.
Another contribution of the paper~\cite{ABY19:Matrix-Poincare} is to establish that some
particular matrix-valued Markov processes satisfy~\eqref{eqn:matrix-poincare}.
Nevertheless, Proposition~\ref{prop:equiv} indicates that no additional
effort is required for this end.

Our approach is similar in spirit to the work of Aoun et al.~\cite{ABY19:Matrix-Poincare},
but we use a symmetrization argument to avoid the difficult recursion.
For related work in the scalar setting, see~\cite[Sec.~4]{BL97:Poincares-Inequalities}.
In a companion paper~\cite{HT20:Matrix-Concentration}, we
show that \emph{local} Poincar{\'e} inequalities lead to much
stronger ergodicity and concentration properties.  The theory
in the companion paper is significantly more involved than
the development here, so we have chosen to separate them.
For results on (local) Poincar{\'e} inequalities in
noncommutative probability spaces,
see Junge \& Zeng~\cite{JZ15:Noncommutative-Martingale}.

\section{Subadditivity}

To control the variance of a function of several independent variables,
it is helpful to understand the influence of each individual variable.
As in the scalar setting, the matrix variance can be bounded by
a sum of conditional variances.  We can control each conditional
variance by a conditional application of a trace Poincar{\'e} inequality.
For simplicity, we focus on the case that is relevant to our proof,
but these results hold in greater generality
(products of more than two spaces that may carry different measures).
Some of the material in this section is drawn
from~\cite{CT14:Subadditivity-Matrix,CH16:Characterizations-Matrix}.
See~\cite{Led96:Talagrands-Deviation} for some of the classic
work on subadditivity and concentration. 

\subsection{Influence of a coordinate}

Consider the product space $(\Omega^2, \mu \otimes \mu)$.
We want to study how individual coordinates affect the
behavior of a matrix-valued function $\mtx{g} : \Omega^2 \to \Sym_d$.

First, introduce notation for the expectation of the function
with respect to each coordinate:
\begin{align*}
\E_{1}[ \mtx{g} ] (z_2) := \E_{Z \sim \mu}[ \mtx{g}(Z, z_2) ] \in \Sym_d \quad\text{for all $z_1 \in \Omega$;} \\
\E_{2}[ \mtx{g} ] (z_1) := \E_{Z \sim \mu}[ \mtx{g}(z_1, Z) ] \in \Sym_d \quad\text{for all $z_2 \in \Omega$.}
\end{align*}
The coordinate-wise variance is the positive-semidefinite random matrix
$$
\Var_i[ \mtx{g} ] := \E_{i} \big[ (\mtx{g} - \E_i \mtx{g} )^2 \big] \in \Sym_d^+
	\quad\text{for $i = 1, 2$.}
$$
This matrix reflects the fluctuation in the $i$th coordinate,
with the other coordinate held fixed.

Similarly, we can introduce the coordinate-wise carr{\'e} du champ operator
and Dirichlet form:
\begin{align}
\mtx{\Gamma}_1(\mtx{g})(z_1, z_2) &:= \lim_{t \downarrow 0} \frac{1}{2t}
	\E \big[ \big( \mtx{g}(Z_t, z_2) - \mtx{g}(Z_0, z_2) \big)^2 \, \big| \, Z_0 = z_1 \big] \in \Sym_d^+; \label{eqn:cond-carre} \\
\dirform_1(\mtx{g})(z_2) &:= \lim_{t \downarrow 0} \frac{1}{2t} \E_{Z \sim \mu} 
	\big[ \big( \mtx{g}(Z_t, z_2) - \mtx{g}(Z_0, z_2) \big)^2 \, \big| \, Z_0 = Z \big] \in \Sym_d^+. \label{eqn:cond-dirichlet}
\end{align}
As usual, $(Z_t : t \geq 0)$ is a realization of the Markov process with initial
value $Z_0$. We make analogous definitions for the second coordinate $i = 2$.

Last, we extend the carr{\'e} du champ operator and the Dirichlet form
to bivariate functions:
\begin{align}
\mtx{\Gamma}(\mtx{g})
	&:= \mtx{\Gamma}_1(\mtx{g})
	+ \mtx{\Gamma}_2(\mtx{g}) \in \Sym_d^+;  \label{eqn:bivar-carre} \\
\dirform(\mtx{g})
	&:= \Expect_{\mu \otimes \mu}\big[ \dirform_1(\mtx{g})
	+ \dirform_2(\mtx{g}) \big]  \in \Sym_d^+.  \label{eqn:bivar-dirichlet}	
\end{align}
These formulas have a heuristic interpretation: the squared derivative
of a bivariate function is the sum of the squared partial derivatives.

\subsection{Trace variance is subadditive}

Observe that the trace variance is controlled
by the sum of the coordinate-wise variances.

\begin{fact}[Trace variance: Subadditivity] \label{fact:var-decomp}
Let $\mtx{g} : \Omega^2 \to \Sym_d$ be a matrix-valued function on the product
space $(\Omega^2, \mu \otimes \mu)$.  Then
$$
\trace \Var_{\mu \otimes \mu}[ \mtx{g} ] \leq \E_2 \trace \Var_1[ \mtx{g} ] + \E_1 \trace \Var_2[ \mtx{g} ]. 
$$
\end{fact}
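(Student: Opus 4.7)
The plan is to mimic the classical ANOVA-style proof of scalar subadditivity, taking care that everything we do makes sense under the trace. The first step is to introduce the martingale-type decomposition
\[
\mtx{g} - \E_{\mu \otimes \mu} \mtx{g} = (\mtx{g} - \E_2 \mtx{g}) + (\E_2 \mtx{g} - \E_{\mu \otimes \mu} \mtx{g}),
\]
where $\E_2 \mtx{g}$ depends only on $z_1$. Squaring and taking the trace produces three terms: $\trace(\mtx{g} - \E_2 \mtx{g})^2$, $\trace(\E_2\mtx{g} - \E_{\mu\otimes\mu}\mtx{g})^2$, and a cross term $2\trace[(\mtx{g} - \E_2 \mtx{g})(\E_2 \mtx{g} - \E_{\mu \otimes \mu} \mtx{g})]$. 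I would then apply $\E_{\mu \otimes \mu}$ and use the tower property: conditioning on $z_1$ and evaluating the inner $\E_2$ kills the factor $\mtx{g} - \E_2 \mtx{g}$, so the cross term vanishes. This gives the exact identity
\[
\trace \Var_{\mu \otimes \mu}[\mtx{g}] = \E_1 \trace \Var_2[\mtx{g}] + \trace \Var_1[\E_2 \mtx{g}].
\]

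The second step is to bound the residual $\trace \Var_1[\E_2 \mtx{g}]$ by $\E_2 \trace \Var_1[\mtx{g}]$. This is the matrix analog of the scalar Jensen-type inequality $\Var[\E[X \mid \mathcal{F}]] \leq \E \Var[X \mid \mathcal{F}]$, and it follows from the convexity of the scalar function $\mtx{A} \mapsto \trace \mtx{A}^2$ on $\Sym_d$. Applying this convexity inequality to the random matrix $\mtx{g}(z_1, z_2) - \E_1 \mtx{g}(z_2)$ (which has zero $\E_2$-conditional mean after we also subtract by the appropriate quantity, but more simply: apply it to $\mtx{h}(z_1,z_2) := \mtx{g}(z_1,z_2) - \E_1 \mtx{g}(\cdot,z_2)$ relative to the second coordinate) and then integrating against $\mu$ yields
\[
\trace\bigl(\E_2\mtx{g}(z_1) - \E_{\mu\otimes\mu}\mtx{g}\bigr)^2 \leq \E_2 \trace\bigl(\mtx{g}(z_1,z_2) - \E_1\mtx{g}(z_2)\bigr)^2.
\]
Averaging over $z_1 \sim \mu$ gives exactly $\trace \Var_1[\E_2 \mtx{g}] \leq \E_2 \trace \Var_1[\mtx{g}]$, which, combined with the identity above, produces the claimed inequality.

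There is no real obstacle here; the only point that deserves care is that one must use the \emph{trace} norm (or more generally a convex function) rather than the matrix variance itself in the last step. The matrix-valued inequality $\Var_1[\E_2 \mtx{g}] \psdle \E_2 \Var_1[\mtx{g}]$ is not quite Jensen's inequality for the operator square, and indeed Jensen's operator inequality for $\mtx{A}\mapsto \mtx{A}^2$ does hold but requires a direct computation; taking the trace at the end converts everything to the scalar convex function $\trace(\cdot)^2$, where the standard Jensen inequality applies cleanly and avoids any noncommutative subtleties. Once the two ingredients (the cross-term cancellation and the conditional Jensen bound) are in place, the claim follows by adding them.
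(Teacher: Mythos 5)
Your argument is correct and is essentially the paper's proof: the same ANOVA-type decomposition with a vanishing cross term, followed by a conditional Jensen bound on the remaining term. The only cosmetic difference is that the paper stays in the semidefinite order (using operator convexity of the square, i.e.\ $(\E_1[\mtx{h}])^2 \psdle \E_1[\mtx{h}^2]$) and takes the trace at the very end, whereas you take the trace first and invoke convexity of $\mtx{A} \mapsto \trace \mtx{A}^2$; both routes work, and the matrix-level Jensen step you were cautious about does in fact hold by the same elementary computation.
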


This result is due to Chen \& Tropp~\cite{CT14:Subadditivity-Matrix},
who showed that other matrix functions are also subadditive.  Later,
Cheng \& Hsieh~\cite{CH16:Characterizations-Matrix} noticed that
an analogous result holds without the trace.  Similar decompositions
are also valid for functions on the $n$-fold product $(\Omega^n, \mu^{\otimes n})$.

\begin{proof} The proof is the same as in the scalar setting.
Writing $\E = \E_{\mu \otimes \mu}$ for the total expectation,
\begin{align*}
\E \Var_{\mu \otimes \mu}[ \mtx{g} ]
	:= \E  \big[ (\mtx{g} - \E \mtx{g})^2 \big]
	&= \E  \big[ (\mtx{g} - \E_1 \mtx{g})^2 + (\E_1 \mtx{g} - \E_{1} \E_{2} \mtx{g})^2 \big] \\
	&\psdle \E [ (\mtx{g} - \E_1 \mtx{g})^2 ]
	+ \E [ (\mtx{g} - \E_2 \mtx{g})^2 ] \\
	&= \E_2  \Var_1[ \mtx{g} ] + \E_1  \Var_2[ \mtx{g} ].
\end{align*}
The first line relies on orthogonality of the conditionally centered
random matrices.  The second line requires the operator convexity
of the square, applied conditionally.  Last, take the trace.
\end{proof}

\subsection{Trace Poincar{\'e} inequalities are subadditive}

If the Markov process satisfies a trace Poincar{\'e} inequality,
then the variance of a bivariate function
also satisfies a trace Poincar{\'e} inequality.

\begin{proposition}[Trace Poincar{\'e}: Subadditivity] \label{prop:poincare-subadd}
Suppose that the Markov process satisfies a trace Poincar{\'e} inequality~\eqref{eqn:trace-poincare}
with constant $\alpha$.  Let $\mtx{g} : \Omega^2 \to \Sym_d$ be a suitable bivariate matrix-valued function.
Then
$$
\trace \Var_{\mu \otimes \mu}[ \mtx{g} ] 	\leq \alpha \cdot \trace \dirform(\mtx{g}).
$$
\end{proposition}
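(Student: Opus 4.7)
The plan is to chain Fact~\ref{fact:var-decomp} with two conditional applications of the trace Poincar{\'e} hypothesis. Fact~\ref{fact:var-decomp} already bounds $\trace\Var_{\mu\otimes\mu}[\mtx{g}]$ by the sum $\E_2\trace\Var_1[\mtx{g}] + \E_1\trace\Var_2[\mtx{g}]$ of the two coordinate-wise trace variances, so the task reduces to controlling each of these by the corresponding coordinate-wise Dirichlet form and reassembling via~\eqref{eqn:bivar-dirichlet}.

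For each fixed $z_2\in\Omega$, I would view the slice $z_1\mapsto \mtx{g}(z_1,z_2)$ as a univariate function $\Omega\to\Sym_d$. Its ordinary matrix variance with respect to $\mu$ equals $\Var_1[\mtx{g}](z_2)$ by definition, and inspection of~\eqref{eqn:cond-dirichlet} shows that its ordinary Dirichlet form equals $\dirform_1(\mtx{g})(z_2)$. Consequently, the assumed trace Poincar{\'e} inequality~\eqref{eqn:trace-poincare}, applied slice-wise, yields
$$
\trace\Var_1[\mtx{g}](z_2) \;\leq\; \alpha\cdot\trace\dirform_1(\mtx{g})(z_2),
$$
and symmetrically in the other coordinate. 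Integrating these two inequalities against $z_2\sim\mu$ and $z_1\sim\mu$ respectively, and summing, gives
$$
\E_2\trace\Var_1[\mtx{g}] + \E_1\trace\Var_2[\mtx{g}] \;\leq\; \alpha\cdot\trace\E_{\mu\otimes\mu}\!\big[\dirform_1(\mtx{g}) + \dirform_2(\mtx{g})\big] \;=\; \alpha\cdot\trace\dirform(\mtx{g}),
$$
where the equality is the definition~\eqref{eqn:bivar-dirichlet} of the bivariate Dirichlet form. Combining this with Fact~\ref{fact:var-decomp} closes the argument.

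There is essentially no hard step: the proposition is a one-line tensorization, and the real content is already encoded in the definitions of the bivariate carr{\'e} du champ and Dirichlet form in~\eqref{eqn:bivar-carre}--\eqref{eqn:bivar-dirichlet}. The only fussy point is verifying that each slice $\mtx{g}(\cdot,z_2)$ and $\mtx{g}(z_1,\cdot)$ lies in the admissible class for which the trace Poincar{\'e} inequality is posited; this falls under Assumption~\ref{ass:main}(3) and the standard product-space construction of the Markov process on $\Omega^2$. The same scheme evidently extends to $n$-fold products by iterating the bivariate bound on the splitting $\Omega^n=\Omega^{n-1}\times\Omega$.
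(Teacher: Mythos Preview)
Your proposal is correct and matches the paper's proof essentially line for line: start from Fact~\ref{fact:var-decomp}, apply the trace Poincar{\'e} inequality~\eqref{eqn:trace-poincare} to each slice to bound the coordinate-wise variances by the coordinate-wise Dirichlet forms, and then invoke the definition~\eqref{eqn:bivar-dirichlet}. The paper states this in one sentence; your version simply spells out the slice-wise application and the integration step more explicitly.
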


\begin{proof}
Start with Fact~\ref{fact:var-decomp}.  Apply the trace Poincar{\'e}
inequality~\eqref{eqn:trace-poincare} coordinate-wise to control
each of the two coordinate-wise variances.  Last, introduce the definition~\eqref{eqn:bivar-dirichlet}
of the Dirichlet form for a bivariate function.
\end{proof}

\section{Chain rule inequality for the Dirichlet form}

The key new tool in our approach is a simple trace inequality for the matrix
Dirichlet form that shows how it interacts with composition.

\begin{proposition}[Chain rule inequality] \label{prop:chain-rule}
Enforce Assumption~\ref{ass:main}.
Let $\phi : \R \to \R$ be a scalar function whose squared derivative $\psi = (\phi')^2$ is convex.
Then
$$
\trace \dirform( \phi(\mtx{f})) 
	= \E_{\mu} \trace \mtx{\Gamma}(\phi(\mtx{f}))
	\leq \E_{\mu} \trace \big[ \mtx{\Gamma}(\mtx{f}) \, \psi(\mtx{f}) \big]
	\quad\text{for all suitable $\mtx{f} : \Omega \to \Sym_d$.}
$$
In particular,
$$
\trace \dirform( \phi(\mtx{g})) 
	= \E_{\mu \otimes \mu} \trace \mtx{\Gamma}(\phi(\mtx{g}))
	\leq \E_{\mu \otimes \mu} \trace  \big[\mtx{\Gamma}(\mtx{g}) \, \psi(\mtx{g}) \big]
	\quad\text{for all suitable $\mtx{g} : \Omega^2 \to \Sym_d$.}
$$
\end{proposition}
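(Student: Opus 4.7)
The equality $\trace\dirform(\phi(\mtx{f})) = \E_\mu \trace\mtx{\Gamma}(\phi(\mtx{f}))$ is just the definition~\eqref{eqn:dirichlet-limit} together with the commutativity of trace and expectation, so the heart of the matter is the pointwise bound
\[
\trace \mtx{\Gamma}(\phi(\mtx{f}))(z) \le \trace\bigl[ \mtx{\Gamma}(\mtx{f})(z)\, \psi(\mtx{f}(z))\bigr].
\]
I plan to deduce this from a two-variable trace inequality applied to the pair $\mtx{A} = \mtx{f}(Z_t)$, $\mtx{B} = \mtx{f}(Z_0)$ conditionally on $Z_0=z$, and then pass $t\downarrow 0$.

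First, I would establish the scalar seed estimate: if $\psi=(\phi')^2$ is convex, then for all real $a,b$,
\[
(\phi(a)-\phi(b))^2 \le \tfrac{1}{2}(a-b)^2\bigl(\psi(a)+\psi(b)\bigr).
\]
Writing $\phi(a)-\phi(b)=\int_b^a \phi'(s)\idiff{s}$ and applying Cauchy--Schwarz gives $(\phi(a)-\phi(b))^2 \le |a-b|\int_b^a \psi(s)\idiff{s}$. Convexity of $\psi$ then yields $\int_b^a \psi(s)\idiff{s} \le \tfrac{|a-b|}{2}(\psi(a)+\psi(b))$ (linear interpolation dominates a convex function on an interval).

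Second, I would lift this to a matrix trace inequality: for any self-adjoint $\mtx{A},\mtx{B}\in\Sym_d$,
\[
\trace\bigl[(\phi(\mtx{A})-\phi(\mtx{B}))^2\bigr]
\le \tfrac{1}{2}\trace\bigl[(\mtx{A}-\mtx{B})^2\bigl(\psi(\mtx{A})+\psi(\mtx{B})\bigr)\bigr].
\]
Let $\mtx{A}=\sum_i a_i\, u_i u_i^*$ and $\mtx{B}=\sum_j b_j\, v_j v_j^*$ be spectral resolutions, and set $c_{ij}=|\langle u_i,v_j\rangle|^2$. A direct expansion gives
\[
\trace\bigl[(\phi(\mtx{A})-\phi(\mtx{B}))^2\bigr] = \sum_{i,j} (\phi(a_i)-\phi(b_j))^2 c_{ij},
\]
and similarly $\trace[(\mtx{A}-\mtx{B})^2\psi(\mtx{A})] = \sum_{i,j}\psi(a_i)(a_i-b_j)^2 c_{ij}$ and its symmetric partner. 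The scalar seed inequality applied entrywise then yields the matrix statement.

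Third, I would substitute $\mtx{A}=\mtx{f}(Z_t)$, $\mtx{B}=\mtx{f}(Z_0)$, condition on $Z_0=z$, divide by $2t$, and send $t\downarrow 0$. The left side converges to $\trace\mtx{\Gamma}(\phi(\mtx{f}))(z)$ by~\eqref{eqn:carre-limit}. On the right, the cyclic property of trace together with continuity of $\mtx{f}$ along the Markov trajectory lets me replace $\psi(\mtx{f}(Z_t))$ by $\psi(\mtx{f}(z))$ in the limit, so both halves of the symmetric average produce $\trace[\mtx{\Gamma}(\mtx{f})(z)\,\psi(\mtx{f}(z))]$; the factor $\tfrac{1}{2}$ disappears. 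Taking expectations with respect to $\mu$ gives the stated Dirichlet form bound. The bivariate case follows verbatim because $(\Omega^2,\mu\otimes\mu)$ is itself the stationary distribution of the product Markov process whose carr\'e du champ and Dirichlet form are defined in~\eqref{eqn:bivar-carre} and~\eqref{eqn:bivar-dirichlet}.

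The main obstacle is the limit step: I must justify exchanging the $t\downarrow 0$ limit with the conditional expectation and with the replacement of $\psi(\mtx{f}(Z_t))$ by $\psi(\mtx{f}(z))$. This is exactly the role of Assumption~\ref{ass:main}(3), which restricts us to functions where the carr\'e du champ limit exists pointwise and in $L_1(\mu)$, and where the necessary calculus manipulations (including dominated convergence against the quadratic fluctuation term) are legitimate.
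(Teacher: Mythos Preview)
Your first two steps are essentially the paper's Lemma~\ref{lem:mvti}: the scalar seed inequality (you use Cauchy--Schwarz where the paper uses Jensen, to the same effect) and its lift to matrices via spectral expansion (which is exactly the generalized Klein inequality the paper cites). So far so good.

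The gap is in your third step. You aim for a \emph{pointwise} bound $\trace\mtx{\Gamma}(\phi(\mtx{f}))(z)\le\trace[\mtx{\Gamma}(\mtx{f})(z)\,\psi(\mtx{f}(z))]$ by replacing $\psi(\mtx{f}(Z_t))$ with $\psi(\mtx{f}(z))$ via ``continuity of $\mtx{f}$ along the Markov trajectory.'' But the Markov processes covered by Assumption~\ref{ass:main} include jump processes (product spaces, SCP measures, random walks on expanders), for which $Z_t\not\to z$ as $t\downarrow 0$. Concretely, for the continuous-time walk on a $k$-regular graph, your right-hand limit is
\[
\frac{1}{4k}\sum_{z'\sim z}\trace\bigl[(\mtx{f}(z')-\mtx{f}(z))^2\bigl(\psi(\mtx{f}(z'))+\psi(\mtx{f}(z))\bigr)\bigr],
\]
which is \emph{not} $\trace[\mtx{\Gamma}(\mtx{f})(z)\,\psi(\mtx{f}(z))]$ in general. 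Assumption~\ref{ass:main}(3) is a regularity blanket for exchanging limits and expectations; it does not furnish path continuity.

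The paper never claims the pointwise bound. Instead it works directly with the Dirichlet form and uses reversibility: when $Z_0\sim\mu$, the pair $(Z_0,Z_t)$ is exchangeable, so the two halves
\[
\E_{Z\sim\mu}\trace\bigl[(\mtx{f}(Z_t)-\mtx{f}(Z_0))^2\psi(\mtx{f}(Z_t))\bigr]
\quad\text{and}\quad
\E_{Z\sim\mu}\trace\bigl[(\mtx{f}(Z_t)-\mtx{f}(Z_0))^2\psi(\mtx{f}(Z_0))\bigr]
\]
are \emph{exactly} equal for every $t>0$, and the factor $\tfrac12$ disappears without any limiting argument. Only then does one condition on $Z_0$ and pass to the limit to recover $\mtx{\Gamma}(\mtx{f})$. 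Your argument can be repaired by inserting this exchangeability step at the $\mu$-averaged level and abandoning the stronger pointwise claim.
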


\noindent
The proof of Proposition~\ref{prop:chain-rule} consumes the rest
of this section.

For context, recall that the carr{\'e} du champ operator $\Gamma$ of a (scalar-valued, reversible) diffusion process
satisfies a chain rule~\cite[Sec.~1.11]{BGL14:Analysis-Geometry}:
$$
\Gamma( \phi(f) ) = \Gamma( f ) \, \phi'(f)^2
\quad\text{for smooth $f$ and $\phi$.}
$$
Proposition~\ref{prop:chain-rule} provides a substitute for this relation for an arbitrary reversible Markov process
that takes matrix values.  In exchange for the wider applicability,
we need some additional averaging (provided by the Dirichlet form);
we must restrict our attention to functions $\phi$ with a convexity property;
and the equality is relaxed to an inequality.
In the scalar case, Proposition~\ref{prop:chain-rule} is related
to the Stroock--Varopoulos inequality~\cite{Str84:Introduction-Theory,Var85:Hardy-Littlewood-Theory}.

\subsection{Mean-value inequality for trace functions}

The argument hinges on an elementary trace inequality for deterministic matrices. This result is obtained by lifting a numerical inequality
to self-adjoint matrices.  A very similar statement~\cite[Lem.~3.4]{MJCFT14:Matrix-Concentration}
animates the exchangeable pairs approach to matrix concentration,
which is motivated by work in the scalar setting~\cite{Cha07:Steins-Method}.

\begin{lemma}[Mean-value trace inequality] \label{lem:mvti}
Let $\mtx{A}, \mtx{B} \in \Sym_d$.  Let $\phi : \R \to \R$ be a scalar function
whose squared derivative $\psi = (\phi')^2$ is convex.  Then
$$
\trace \big[ \big(\phi(\mtx{A}) - \phi(\mtx{B})\big)^2 \big]
	\leq \frac{1}{2} \trace \big[ (\mtx{A} - \mtx{B})^2 \big( \psi(\mtx{A}) + \psi(\mtx{B}) \big) \big].
$$
\end{lemma}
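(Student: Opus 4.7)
The plan is to prove the analogous scalar inequality first and then lift it to matrices via a joint spectral decomposition of $\mtx{A}$ and $\mtx{B}$. In the scalar case, I aim to show that
\begin{equation*}
\bigl(\phi(a) - \phi(b)\bigr)^2 \leq \tfrac{1}{2}(a-b)^2 \bigl(\psi(a) + \psi(b)\bigr)
\quad\text{for all $a,b \in \R$.}
\end{equation*}
The fundamental theorem of calculus writes $\phi(a) - \phi(b) = (a-b)\int_0^1 \phi'((1-s)b + sa)\idiff{s}$; Cauchy--Schwarz applied to the integral produces $(\phi(a) - \phi(b))^2 \leq (a-b)^2 \int_0^1 \psi((1-s)b + sa)\idiff{s}$; and convexity of $\psi$ bounds the integrand pointwise by $(1-s)\psi(b) + s\psi(a)$, whose integral over $s \in [0,1]$ equals $\tfrac{1}{2}(\psi(a)+\psi(b))$.

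For the matrix lifting I would diagonalize $\mtx{A} = \sum_i \lambda_i \vct{u}_i \vct{u}_i^*$ and $\mtx{B} = \sum_j \mu_j \vct{v}_j \vct{v}_j^*$ and set $c_{ij} = \ip{\vct{u}_i}{\vct{v}_j}$, so that $\sum_j |c_{ij}|^2 = 1 = \sum_i |c_{ij}|^2$. Expanding via the identity $\trace[\vct{u}_i \vct{u}_i^* \vct{v}_j \vct{v}_j^*] = |c_{ij}|^2$, trace cyclicity, and the commutation of $\mtx{A}$ with $\psi(\mtx{A})$ should yield the formulas
\begin{align*}
\trace\bigl[(\phi(\mtx{A}) - \phi(\mtx{B}))^2\bigr] &= \sum_{i,j} |c_{ij}|^2 \bigl(\phi(\lambda_i) - \phi(\mu_j)\bigr)^2, \\
\trace\bigl[(\mtx{A} - \mtx{B})^2 \psi(\mtx{A})\bigr] &= \sum_{i,j} |c_{ij}|^2 (\lambda_i - \mu_j)^2 \psi(\lambda_i),
\end{align*}
plus the mirror identity with $\psi(\mtx{B})$ in place of $\psi(\mtx{A})$. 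Applying the scalar inequality to each pair $(\lambda_i, \mu_j)$ and weighting by the nonnegative coefficients $|c_{ij}|^2$ then delivers the desired matrix inequality.

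The only point needing any care is the cross-term identity $\trace[\mtx{A}\mtx{B}\psi(\mtx{A})] + \trace[\mtx{B}\mtx{A}\psi(\mtx{A})] = 2\sum_{i,j}\lambda_i \psi(\lambda_i)\mu_j |c_{ij}|^2$. The potential worry is that $\mtx{A}$ and $\mtx{B}$ do not commute, but since $\mtx{A}$ commutes with $\psi(\mtx{A})$ and the trace is cyclic, both summands collapse to the same diagonal sum, exactly as in the commuting case. So non-commutativity does not actually obstruct the reduction, and the proof really does boil down to the one-dimensional Cauchy--Schwarz plus convexity argument.
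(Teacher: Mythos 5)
Your argument is correct, and its scalar core is exactly the paper's: write $\phi(a)-\phi(b)$ by the fundamental theorem of calculus, apply Cauchy--Schwarz (Jensen) to the integral to get $(\phi(a)-\phi(b))^2 \le (a-b)^2\int_0^1 \psi(\tau a+(1-\tau)b)\,\mathrm{d}\tau$, and use convexity of $\psi$ to bound the integral by $\tfrac12(\psi(a)+\psi(b))$. Where you depart from the paper is the lifting step. The paper simply invokes the generalized Klein inequality of Petz to transfer the numerical inequality to a trace inequality, whereas you prove the needed instance by hand via the joint two-eigenbasis expansion: with $\mtx{A}=\sum_i\lambda_i\vct{u}_i\vct{u}_i^*$, $\mtx{B}=\sum_j\mu_j\vct{v}_j\vct{v}_j^*$, and $c_{ij}=\ip{\vct{u}_i}{\vct{v}_j}$, your identities $\trace[(\phi(\mtx{A})-\phi(\mtx{B}))^2]=\sum_{i,j}\abs{c_{ij}}^2(\phi(\lambda_i)-\phi(\mu_j))^2$ and $\trace[(\mtx{A}-\mtx{B})^2\psi(\mtx{A})]=\sum_{i,j}\abs{c_{ij}}^2(\lambda_i-\mu_j)^2\psi(\lambda_i)$ (plus its mirror) do check out, including the cross terms, precisely because $\mtx{A}$ commutes with $\psi(\mtx{A})$ and the trace is cyclic; weighting the scalar inequality by $\abs{c_{ij}}^2\ge 0$ and summing finishes the proof. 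In effect you have reproved the relevant case of the generalized Klein inequality, since the double spectral decomposition is the standard proof of that result. What each route buys: yours is elementary and self-contained, making transparent exactly where the positivity of the weights $\abs{c_{ij}}^2$ enters; the paper's citation is shorter and emphasizes that any scalar inequality of the form $\sum_k f_k(a)g_k(b)\ge 0$ lifts to traces, which is the reusable principle behind this and similar mean-value trace inequalities.
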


\begin{proof}
Let $a, b \in \R$.  The fundamental theorem of calculus and Jensen's inequality together deliver the relations
\begin{align*}
\big(\phi(a) - \phi(b)\big)^2 &= (a-b)^2 \left[ \int_0^1 \diff{\tau} \, \phi'\big(\tau a + (1-\tau) b \big) \right]^2 \\
	&\leq (a-b)^2 \int_0^1 \diff{\tau} \,\psi\big( \tau a + (1-\tau) b\big) \\
	&\leq (a-b)^2 \int_0^1 \diff{\tau} \, \big[ \tau \psi(a) + (1-\tau) \psi(b) \big]
	= \frac{1}{2} (a-b)^2 \big(\psi(a) + \psi(b)\big).
\end{align*}
The generalized Klein inequality~\cite[Prop.~3]{Pet94:Survey-Certain} allows us to lift
this numerical fact to a trace inequality for matrices $\mtx{A}, \mtx{B} \in \Sym_d$.
\end{proof}

\subsection{Proof of Proposition~\ref{prop:chain-rule}}

The result follows from a short calculation.
First, we use the definition~\eqref{eqn:dirichlet-limit}
of the Dirichlet form as a limit:
\begin{align*}
\trace \dirform( \phi(\mtx{f}) )
	&= \lim_{t \downarrow 0} \frac{1}{2t} \E_{Z \sim \mu} \big[\trace\big[ \big(\phi(\mtx{f}(Z_t)) - \phi(\mtx{f}(Z_0)) \big)^2 \big] \, \big| \, Z_0 = Z \big] \\
	&\leq \lim_{t \downarrow 0} \frac{1}{4t} \E_{Z \sim \mu} \trace \big[ \big(\mtx{f}(Z_t) - \mtx{f}(Z_0) \big)^2 \big(\psi(\mtx{f}(Z_t)) + \psi(\mtx{f}(Z_0)) \big) \, \big| \, Z_0 =Z \big] \\
	&= \lim_{t \downarrow 0} \frac{1}{2t} \E_{Z \sim \mu} \trace \big[ (\mtx{f}(Z_t) - \mtx{f}(Z_0))^2 \, \psi(\mtx{f}(Z_0)) \, \big| \, Z_0 = Z \big] \\
	&= \E_{Z \sim \mu} \trace \Big[ \lim_{t \downarrow 0} \frac{1}{2t} \E \Big[ (\mtx{f}(Z_t) - \mtx{f}(Z_0))^2 \, \big| \, Z_0 = Z \Big] \, \psi(\mtx{f}(Z)) \Big] \\
	&= \E_{\mu} \trace \big[ \mtx{\Gamma}(\mtx{f}) \, \psi(\mtx{f}) \big].
\end{align*}
The inequality is Lemma~\ref{lem:mvti}.  To reach the third line, we use the
fact that $(Z_0, Z_t)$ is an exchangeable pair for each $t \geq 0$, a consequence of the reversibility
of the Markov process $(Z_t : t \geq 0)$ and the fact that $Z_0 \sim \mu$.
Last, we condition on the value of $Z_0$, invoke dominated convergence to pass the expectation through
the limit, and we apply the definition~\eqref{eqn:carre-limit} of the carr{\'e} du champ operator.

\section{Exponential moments}
\label{sec:exp-moments}

Our main technical result is a bound for the exponential moments of a general matrix-valued
function on the state space.  In contrast to the usual approach of bounding the moment generating
function, we will compute the expectation of a hyperbolic function.  
\begin{theorem}[Exponential moments] \label{thm:exp-moment}
Enforce Assumption~\ref{ass:main}.
Let $\mtx{f} : \Omega \to \Sym_d$ be a function with $\E_{\mu} \mtx{f} = \mtx{0}$.
For $\theta > 0$,
\begin{equation} \label{eqn:exp-moment}
\E_{\mu} \trace \cosh(\theta \mtx{f})
	\leq d \cdot \left[ 1 + \frac{\alpha \theta^2 \ntr \dirform(\mtx{f})}{(1 - \alpha v_{\mtx{f}} \theta^2/2)_+} \right].
\end{equation}
The variance proxy $v_{\mtx{f}}$ is defined in~\eqref{eqn:vf},
and $\ntr$ is the normalized trace.
\end{theorem}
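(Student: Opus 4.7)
The plan is to symmetrize, reduce to a variance, and close a self-referential inequality using the subadditive Poincar{\'e} inequality (Proposition~\ref{prop:poincare-subadd}) and the chain rule (Proposition~\ref{prop:chain-rule}). I would introduce $\mtx{g}(z_1, z_2) := \mtx{f}(z_1) - \mtx{f}(z_2)$ on $(\Omega^2, \mu \otimes \mu)$. Since $\mtx{A} \mapsto \trace \cosh(\mtx{A})$ is a convex trace function and $\E_\mu \mtx{f} = \mtx{0}$, Jensen's inequality yields $\E_\mu \trace \cosh(\theta \mtx{f}) \leq M(\theta) := \E_{\mu \otimes \mu} \trace \cosh(\theta \mtx{g})$. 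Applying the matrix identity $\cosh(\theta \mtx{g}) = \Id + 2\sinh^2(\theta \mtx{g}/2)$ and the distributional symmetry $\mtx{g} \sim -\mtx{g}$ (so that $\E_{\mu \otimes \mu} \sinh(\theta \mtx{g}/2) = \mtx{0}$ by oddness of $\sinh$), I arrive at
$$M(\theta) - d = 2\trace \Var_{\mu \otimes \mu}[\sinh(\theta \mtx{g}/2)].$$

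Next, I apply the subadditive trace Poincar{\'e} inequality and then the chain rule with $\phi(x) = \sinh(\theta x/2)$, for which $\psi = (\phi')^2 = (\theta^2/4)\cosh^2(\theta x/2)$ is convex. This produces
$$M(\theta) - d \leq \tfrac{1}{2} \alpha \theta^2 \E_{\mu \otimes \mu} \trace\bigl[\mtx{\Gamma}(\mtx{g}) \cosh^2(\theta \mtx{g}/2)\bigr].$$
From~\eqref{eqn:cond-carre}--\eqref{eqn:bivar-carre}, $\mtx{\Gamma}(\mtx{g})(z_1, z_2) = \mtx{\Gamma}(\mtx{f})(z_1) + \mtx{\Gamma}(\mtx{f})(z_2)$, so $\E_{\mu \otimes \mu} \trace \mtx{\Gamma}(\mtx{g}) = 2 \trace \dirform(\mtx{f})$ and $\|\mtx{\Gamma}(\mtx{g})\| \leq 2 v_{\mtx{f}}$ pointwise.

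The delicate final step is to decompose $2\cosh^2(\theta \mtx{g}/2) = \Id + \cosh(\theta \mtx{g}) = 2\Id + (\cosh(\theta \mtx{g}) - \Id)$. Since $\cosh(\theta \mtx{g}) - \Id \psdge \mtx{0}$ has expected trace $M(\theta) - d$, the bound $\trace[\mtx{A} \mtx{B}] \leq \|\mtx{A}\| \trace \mtx{B}$ (for positive semidefinite $\mtx{B}$) applied only to this piece gives
$$\E_{\mu \otimes \mu} \trace\bigl[\mtx{\Gamma}(\mtx{g}) \cosh^2(\theta \mtx{g}/2)\bigr] \leq 2 \trace \dirform(\mtx{f}) + v_{\mtx{f}} (M(\theta) - d).$$
Plugging back produces the recursion $M(\theta) - d \leq \alpha \theta^2 \trace \dirform(\mtx{f}) + \tfrac{1}{2}\alpha v_{\mtx{f}} \theta^2 (M(\theta) - d)$; solving for $M(\theta) - d$ and rewriting $\trace \dirform = d \cdot \ntr \dirform$ delivers the advertised bound. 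The $(\cdot)_+$ handles the range $\alpha v_{\mtx{f}} \theta^2/2 \geq 1$, where the right-hand side is trivially infinite.

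The main obstacle is precisely this last decomposition. A globally naive estimate $\E \trace[\mtx{\Gamma}(\mtx{g}) \cosh(\theta \mtx{g})] \leq 2 v_{\mtx{f}} M(\theta)$ would leave one with the looser denominator $1 - \alpha v_{\mtx{f}} \theta^2$ and lose a factor of $2$. To achieve the stated $1 - \alpha v_{\mtx{f}} \theta^2/2$, one must separate the identity part of $\cosh(\theta \mtx{g})$ and absorb it into the $2 \trace \dirform(\mtx{f})$ term, so that the factor $\|\mtx{\Gamma}(\mtx{g})\| \leq 2 v_{\mtx{f}}$ is charged only against $M(\theta) - d$ rather than $M(\theta)$. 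That precise accounting is what lets the self-referential inequality close at the correct constant.
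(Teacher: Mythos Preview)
Your proof is correct and follows essentially the same route as the paper's: symmetrize via $\mtx{g}=\mtx{f}(z_1)-\mtx{f}(z_2)$, convert to a variance using the oddness of $\sinh$, apply the bivariate trace Poincar{\'e} inequality and the chain rule, and close the self-referential inequality via $\cosh^2=1+\sinh^2$ (your decomposition $2\cosh^2(\theta\mtx{g}/2)=2\Id+(\cosh(\theta\mtx{g})-\Id)$ is exactly this identity). The only cosmetic difference is that the paper symmetrizes $\trace\sinh^2(\theta\mtx{f})$, runs the argument at parameter $\theta$, and performs the half-angle substitution $\theta\mapsto\theta/2$ at the very end, whereas you symmetrize $\trace\cosh(\theta\mtx{f})$ and build the half-angle in from the start; the resulting recursion and constants are identical.
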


The proof of Theorem~\ref{thm:exp-moment} occupies the rest of the section.
But first, we use this moment bound to derive our main result, Theorem~\ref{thm:main}.

\begin{proof}[Proof of Theorem~\ref{thm:main} from Theorem~\ref{thm:exp-moment}]
Without loss, assume that $\E_{\mu} \mtx{f} = \mtx{0}$.
We use the matrix moment method~\cite{AW02:Strong-Converse,Oli10:Sums-Random}:
\begin{align*}
\mathbbm{P}_{\mu} \left\{ \norm{ \mtx{f} } \geq \lambda \right\}
	&\leq \inf_{\theta > 0} \frac{1}{\cosh(\theta \lambda)} \cdot \E_\mu \cosh(\theta \norm{\mtx{f}})
	= \inf_{\theta > 0} \frac{2}{\econst^{\theta \lambda} + \econst^{-\theta \lambda}} \cdot \E_{\mu} \norm{ \smash{\cosh(\theta \mtx{f})} } \\
	&\leq \inf_{\theta > 0} \frac{2}{\econst^{\theta \lambda}} \cdot \E_{\mu} \trace \cosh(\theta \mtx{f})
	\leq 2d \cdot \inf_{\theta > 0} \econst^{-\theta \lambda} \cdot \left[ 1 + \frac{\alpha \theta^2 \ntr \dirform(\mtx{f})}{(1 - \alpha v_{\mtx{f}} \theta^2/2)_+} \right] \\
	&\leq 2d \cdot \econst^{- \lambda / \sqrt{\alpha v_{\mtx{f}}}} \cdot \left[ 1 + \frac{2 \ntr \dirform(\mtx{f})}{ v_{\mtx{f}} } \right]
	\leq 6 d \cdot \econst^{- \lambda / \sqrt{\alpha v_{\mtx{f}}}}.
\end{align*}
The first inequality is Markov's.  The second relation is the spectral mapping theorem.
The $\ell_2$ operator norm of a positive-definite matrix is obviously bounded by its trace.
Then invoke Theorem~\ref{thm:exp-moment} to control the moment.
We have chosen $\theta = (\alpha v_{\mtx{f}})^{-1/2}$.  Last, we have noted that
$\ntr \dirform(\mtx{f}) \leq \norm { \dirform(\mtx{f})} \leq v_{\mtx{f}}$.
To finish the proof of the tail bound~\eqref{eqn:main-tail},
make the change of variables $\lambda \mapsto \lambda \sqrt{\alpha v_{\mtx{f}}}$.

The expectation bound~\eqref{eqn:main-expect} follows when we integrate the tail bound~\eqref{eqn:main-tail},
taking into account that the probability is also bounded by one; we omit the details.
\end{proof}

\subsection{Proof of Theorem~\ref{thm:exp-moment}}

Our goal is to develop an exponential moment bound for a
function $\mtx{f} : \Omega \to \Sym_d$ that satisfies
$\E_{\mu} \mtx{f} = \mtx{0}$.  We will need to work
with both the hyperbolic sine and cosine,
passing between them using simple identities.
After writing this paper, we learned that this proof is
a matrix analog of an argument proposed by
Bobkov \& Ledoux~\cite[Sec.~4]{BL97:Poincares-Inequalities}.

\subsubsection{Symmetrization}

The first step is to symmetrize the function.  Let $Z, Z' \in \Omega$
be independent random variables, each with distribution $\mu$.
Since $\mtx{f}(Z')$ has mean zero, a conditional application
of Jensen's inequality yields
\begin{equation} \label{eqn:coshf-bd}
\E \trace \sinh^2(\theta \mtx{f}(Z))
	\leq \E \trace \sinh^2( \theta (\mtx{f}(Z) - \mtx{f}(Z')))
	=: \E \trace \sinh^2( \theta \mtx{g}(Z,Z')).
\end{equation}
Indeed, since $\sinh^2$ is convex, the function $\trace \sinh^2$ is also convex~\cite[Prop.~2]{Pet94:Survey-Certain}.
We have defined the antisymmetric function $\mtx{g}(z, z') = \mtx{f}(z) - \mtx{f}(z')$ for $z, z' \in \Omega$.

\subsubsection{From moments to variance}
\label{sec:exp-mom-var}

Next, we will write the expectation as a variance.
Consider the odd function $\phi(s) = \sinh(\theta s)$.
First, we claim that
$$
\E \sinh( \theta \mtx{g}(Z, Z') ) = \mtx{0}.
$$
Indeed, using the antisymmetry of $\mtx{g}$
and the oddness of $\phi$,
$$
\E \sinh( \theta \mtx{g}(Z, Z') ) = \E \sinh( - \theta \mtx{g}(Z', Z) )
	= - \E \sinh( \theta \mtx{g}(Z', Z) )
	= - \E \sinh( \theta \mtx{g}(Z, Z') ).
$$
The last identity holds because $(Z, Z')$ is exchangeable.

As an immediate consequence,
$$
\E_{\mu \otimes \mu} \trace \sinh^2( \theta \mtx{g} )
	= \E_{\mu \otimes \mu} \trace \sinh^2( \theta \mtx{g} ) - \trace\big[ \big(\E_{\mu \otimes \mu} \sinh(\theta \mtx{g}) \big)^2 \big]
	=  \trace \Var_{\mu \otimes \mu}[ \sinh(\theta \mtx{g}) ].
$$
The appearance of the variance gives us access to Poincar{\'e} inequalities.

\subsubsection{Poincar{\'e} inequality}

To continue, we apply Proposition~\ref{prop:poincare-subadd}, the trace Poincar{\'e}
inequality for bivariate functions:
\begin{equation} \label{eqn:exp-mom-pf-2}
\E_{\mu \otimes \mu} \trace \sinh^2( \theta\mtx{g} )
	\leq \alpha \cdot \trace \dirform( \sinh(\theta \mtx{g}) )
	\leq \alpha \theta^2 \cdot \E_{\mu \otimes \mu} \trace \big[ \mtx{\Gamma}(\mtx{g}) \cosh^2(\theta \mtx{g}) \big].
\end{equation}
The second inequality is the chain rule, Proposition~\ref{prop:chain-rule},
for the Dirichlet form.  To activate it, we note
that the squared derivative of $\phi(s) = \sinh(\theta s)$
is the convex function $\psi(s) = \theta^2 \cosh^2(\theta s)$.

\subsubsection{Moment comparison}

A moment comparison argument allows us to isolate the exponential moment.
Define the variance proxy of the bivariate function:
\begin{equation} \label{eqn:vg}
v_{\mtx{g}} := \norm{ \norm{ \mtx{\Gamma}(\mtx{g}) } }_{L_{\infty}(\mu \otimes \mu)}.
\end{equation}
Continuing from~\eqref{eqn:exp-mom-pf-2}, the identity $\cosh^2 = 1 + \sinh^2$ implies that
\begin{align*}
\E_{\mu \otimes \mu} \trace \sinh^2( \theta\mtx{g} )
	&\leq \alpha \theta^2 \cdot \E_{\mu \otimes \mu} \trace \mtx{\Gamma}(\mtx{g}) + \alpha \theta^2 \cdot \E_{\mu \otimes \mu} \trace\big[\mtx{\Gamma}(\mtx{g}) \sinh^2(\theta \mtx{g}) \big] \\
	&\leq \alpha \theta^2 \cdot \trace \dirform(\mtx{g}) + \alpha \theta^2 v_{\mtx{g}} \cdot \E_{\mu\otimes \mu} \trace \sinh^2(\theta \mtx{g}).
\end{align*}
The second inequality is just the usual operator-norm bound for the trace.
Rearrange this identity to arrive at
\begin{equation} \label{eqn:coshg-bd}
\E_{\mu \otimes \mu} \trace \sinh^2( \theta \mtx{g} )
	\leq \frac{\alpha \theta^2 \trace \dirform(\mtx{g})}{(1 - \alpha v_{\mtx{g}} \theta^2)_+}.
\end{equation}
It remains to revert to the original function $\mtx{f}$.

\subsubsection{Comparison of carr{\'e} du champs}

Let us compute the Dirichlet form $\dirform(\mtx{g})$ and the variance proxy $v_{\mtx{f}}$
in terms of the original function $\mtx{f}$.
To that end, observe that the coordinate-wise carr{\'e} du champ~\eqref{eqn:cond-carre}
satisfies
\begin{align*}
\mtx{\Gamma}_1(\mtx{g})(z,z')
	&= \E \lim_{t \downarrow 0} \frac{1}{2t} \big[ \big(\mtx{g}(Z_t,z') - \mtx{g}(Z_0,z')\big)^2 \, \big| \, Z_0 = z \big] \\
	&= \E \lim_{t \downarrow 0} \frac{1}{2t} \big[ \big(\mtx{f}(Z_t) - \mtx{f}(Z_0) \big)^2 \, \big| \, Z_0 = z \big]
	= \mtx{\Gamma}(\mtx{f})(z).
\end{align*}
A similar calculation reveals that $\mtx{\Gamma}_2(\mtx{g})(z,z') = \mtx{\Gamma}(\mtx{f})(z')$.
Thus, the bivariate carr{\'e} du champ~\eqref{eqn:bivar-carre} takes the form
\begin{equation} \label{eqn:bivar-carre-sym}
\mtx{\Gamma}(\mtx{g})(z,z') = \mtx{\Gamma}_1(\mtx{g})(z,z') + \mtx{\Gamma}_2(\mtx{g})(z,z')
	= \mtx{\Gamma}(\mtx{f})(z) + \mtx{\Gamma}(\mtx{f})(z').
\end{equation}
As a consequence, the Dirichlet form can be calculated as
\begin{equation} \label{eqn:dirform-gf}
\dirform(\mtx{g}) = \E_{\mu \otimes \mu} \mtx{\Gamma}(\mtx{g})
	= \E_{\mu \otimes \mu} \big[\mtx{\Gamma}(\mtx{f})(z) + \mtx{\Gamma}(\mtx{f})(z') \big]
	= 2\dirform(\mtx{f}).
\end{equation}
The variance proxy~\eqref{eqn:vg} of the bivariate function satisfies
\begin{equation} \label{eqn:vg-vf}
v_{\mtx{g}} = 	\norm{ \norm{ \mtx{\Gamma}(\mtx{g})(z,z') } }_{L_{\infty}(\mu \otimes \mu)}
		\leq 		\norm{ \norm{ \mtx{\Gamma}(\mtx{f})(z) } }_{L_{\infty}(\mu)}
	+ 	\norm{ \norm{ \mtx{\Gamma}(\mtx{f})(z') } }_{L_{\infty}(\mu)}
	= 2 v_{\mtx{f}}.
\end{equation}
The last relation is the definition~\eqref{eqn:vf} of the variance proxy $v_{\mtx{f}}$.

\subsubsection{Endgame}

Combining~\eqref{eqn:coshf-bd} and~\eqref{eqn:coshg-bd}, we see that
\begin{equation*}
\E_{\mu} \trace \sinh^2(\theta \mtx{f})
	\leq  \frac{\alpha \theta^2 \trace \dirform(\mtx{g})}{(1 - \alpha v_{\mtx{g}} \theta^2)_+}
	\leq \frac{2 \alpha \theta^2 \trace \dirform(\mtx{f})}{(1 - 2 \alpha v_{\mtx{f}} \theta^2)_+}.
\end{equation*}
We have also used the relations~\eqref{eqn:dirform-gf} and~\eqref{eqn:vg-vf}
from the last section.

To compete the proof of~\eqref{eqn:exp-moment},
invoke the identity
$\sinh^2(s) = (\cosh(2 s) - 1)/2$ to see that
\begin{align*}
\E_{\mu} \trace \cosh(2 \theta \mtx{f})
		\leq d + \frac{4 \alpha \theta^2 \trace \dirform(\mtx{f})}{(1 - 2 \alpha v_{\mtx{f}} \theta^2)_+}.
\end{align*}
Finally, introduce the normalized trace, $\ntr$,
and make the change of variables $\theta \mapsto \theta/2$ to arrive at
$$
\E_{\mu} \trace \cosh(\theta \mtx{f})
	\leq d \cdot \left[ 1 + \frac{\alpha \theta^2 \ntr \dirform(\mtx{f})}{(1 - \alpha v_{\mtx{f}} \theta^2 / 2)_+} \right].
$$
This is the stated result.

\section{Polynomial Moments}
\label{sec:poly-moments}

By a simple variation on the proof of Theorem~\ref{thm:exp-moment},
we can also obtain bounds for the polynomial moments of a random matrix.  

\begin{theorem}[Polynomial moments] \label{thm:poly-moment}
Enforce Assumption~\ref{ass:main}.
Let $\mtx{f} : \Omega \to \Sym_d$ be a function with $\E_{\mu} \mtx{f} = \mtx{0}$.
For $q = 1$ and $q \geq 1.5$,
\begin{equation} \label{eqn:poly-moment}
\big( \E_{\mu} \trace \abs{\mtx{f}}^{2q} \big)^{1/(2q)}
	\leq \sqrt{2 \alpha q^2} \cdot \big( \E_{\mu} \trace \mtx{\Gamma}(\mtx{f})^q \big)^{1/(2q)}.
\end{equation}
\end{theorem}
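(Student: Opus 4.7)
The plan is to mimic the proof of Theorem~\ref{thm:exp-moment}, replacing $\phi(s) = \sinh(\theta s)$ with the odd power function $\phi(s) = \operatorname{sgn}(s) \cdot |s|^q$, and to do the final moment-rearrangement using a noncommutative H\"older inequality rather than the $\cosh^2 = 1 + \sinh^2$ trick. The restriction to $q = 1$ and $q \ge 1.5$ will appear naturally, because the chain rule (Proposition~\ref{prop:chain-rule}) requires $(\phi')^2(s) = q^2 |s|^{2q-2}$ to be convex, which demands $2q - 2 \ge 1$ (or the trivial case $q=1$).

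First, I would symmetrize exactly as in Section~\ref{sec:exp-mom-var}. Let $Z, Z' \sim \mu$ be independent and set $\mtx{g}(z,z') = \mtx{f}(z) - \mtx{f}(z')$; since $\E_{\mu} \mtx{f} = \mtx{0}$ and $\mtx{A} \mapsto \trace |\mtx{A}|^{2q}$ is convex for $2q \ge 1$, a conditional Jensen gives
$$
\E_{\mu} \trace \abs{\mtx{f}}^{2q} \;\le\; \E_{\mu \otimes \mu} \trace \abs{\mtx{g}}^{2q} \;=\; \E_{\mu \otimes \mu} \trace \phi(\mtx{g})^2.
$$
Next, antisymmetry of $\mtx{g}$, exchangeability of $(Z,Z')$, and oddness of $\phi$ yield $\E \phi(\mtx{g}) = \mtx{0}$, so the right-hand side equals $\trace \Var_{\mu \otimes \mu}[\phi(\mtx{g})]$. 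Applying the subadditive trace Poincar\'e inequality (Proposition~\ref{prop:poincare-subadd}) and then the chain rule (Proposition~\ref{prop:chain-rule}) with $\psi(s) = q^2 |s|^{2q-2}$ convex, I obtain
$$
\E_{\mu \otimes \mu} \trace \abs{\mtx{g}}^{2q}
\;\le\; \alpha \trace \dirform(\phi(\mtx{g}))
\;\le\; \alpha q^2 \, \E_{\mu \otimes \mu} \trace \bigl[ \mtx{\Gamma}(\mtx{g}) \, \abs{\mtx{g}}^{2q-2} \bigr].
$$

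Now I would apply noncommutative H\"older with conjugate exponents $q$ and $q/(q-1)$, followed by scalar H\"older, to the right-hand side:
$$
\E \trace\bigl[\mtx{\Gamma}(\mtx{g}) \, \abs{\mtx{g}}^{2q-2}\bigr]
\;\le\; \bigl(\E \trace \mtx{\Gamma}(\mtx{g})^q\bigr)^{1/q}
\bigl(\E \trace \abs{\mtx{g}}^{2q}\bigr)^{(q-1)/q}.
$$
Write $M = \E \trace |\mtx{g}|^{2q}$ and $N = \E \trace \mtx{\Gamma}(\mtx{g})^q$; the previous two displays combine to $M \le \alpha q^2 N^{1/q} M^{(q-1)/q}$, and rearranging gives $M^{1/(2q)} \le \sqrt{\alpha q^2}\, N^{1/(2q)}$.

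Finally, I need to revert from $\mtx{g}$ to $\mtx{f}$. By~\eqref{eqn:bivar-carre-sym} in the proof of Theorem~\ref{thm:exp-moment}, $\mtx{\Gamma}(\mtx{g})(z,z') = \mtx{\Gamma}(\mtx{f})(z) + \mtx{\Gamma}(\mtx{f})(z')$. The Schatten triangle inequality (valid for $q \ge 1$) together with $(a+b)^q \le 2^{q-1}(a^q + b^q)$ yields
$$
\E_{\mu \otimes \mu} \trace \mtx{\Gamma}(\mtx{g})^q
\;\le\; 2^q \, \E_{\mu} \trace \mtx{\Gamma}(\mtx{f})^q,
$$
which produces the extra $\sqrt{2}$ factor and, when chained with the bound on $M$, gives precisely the claim~\eqref{eqn:poly-moment}. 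The main obstacle I expect is verifying the H\"older step cleanly for $q = 3/2$ (the boundary case where $|s|^{2q-2} = |s|$ is only barely convex) and confirming the convexity hypothesis of Proposition~\ref{prop:chain-rule} is genuinely available there; everything else is bookkeeping, with the gap $1 < q < 3/2$ arising from the failure of convexity of $\psi$.
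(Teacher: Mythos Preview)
Your proposal is correct and follows essentially the same route as the paper: symmetrize, identify the moment as a trace variance via the odd function $\phi(s)=\operatorname{sgn}(s)|s|^q$, apply the bivariate trace Poincar\'e inequality and the chain rule, then use Schatten--H\"older to rearrange, and finally compare $\mtx{\Gamma}(\mtx{g})$ with $\mtx{\Gamma}(\mtx{f})$ via the triangle inequality to pick up the factor $\sqrt{2}$. Your worry about the boundary case $q=3/2$ is unfounded: $\psi(s)=q^2|s|$ is convex and the H\"older step is standard for all $q\ge 1$, so no additional care is needed there.
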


By combining Theorem~\ref{thm:poly-moment} with the moment method, we can obtain
probability bounds for $\norm{\mtx{f}}$.  Let us summarize how these results compare
with the main result, Theorem~\ref{thm:main}.  Observe that Theorem~\ref{thm:poly-moment}
gives a bound on the Schatten $2q$-norm of the random matrix $\mtx{f}(Z)$
in terms of the Schatten $2q$-norm of $\mtx{\Gamma}(\mtx{f})^{1/2}$.
We have the relation
\begin{equation} \label{eqn:poly-unif-1}
\big( \E_{\mu} \trace \mtx{\Gamma}(\mtx{f})^{q} \big)^{1/(2q)}
	\leq d^{1/(2q)} \cdot \norm{ \norm{ \mtx{\Gamma}(\mtx{f})(z) } }_{L_{\infty}(\mu)}^{1/2}
	= d^{1/(2q)} \cdot \sqrt{ v_{\mtx{f}} }.
\end{equation}
Therefore, Theorem~\ref{thm:poly-moment} potentially yields stronger bounds
than Theorem~\ref{thm:exp-moment}.

In particular, Theorem~\ref{thm:poly-moment} applies even when
$\mtx{\Gamma}(\mtx{f})$ is not uniformly bounded.
Example~\ref{ex:chaos} illustrates why this flexibility is valuable. 
In Section~\ref{sec:poly-var},
we show that slightly better polynomial moment bounds are possible
when $\mtx{\Gamma}(\mtx{f})$ is uniformly bounded.

\begin{remark}[Missing Parameters]
Theorem~\ref{thm:poly-moment} also holds for $q \in (1, 1.5)$, with an
extra factor of $\sqrt{2}$ on the right-hand side.  The proof uses
a variant of Proposition~\ref{prop:chain-rule} that only requires
the function $\psi$ to be monotone.
\end{remark}

\begin{example}[Gaussian Chaos]
\label{ex:chaos}
Consider the matrix Gaussian chaos
$$
\vct{f}(\vct{X}) = \sum_{i, j = 1}^n X_i X_j \, \mtx{A}_{ij}
\quad\text{where $\vct{X} \sim \gamma_n$ and $\mtx{A}_{ij} = \mtx{A}_{ji} \in \Sym_d$.}
$$
To bound the trace moments of $f(\vct{X})$,
observe that the carr{\'e} du champ takes the form
$$
\mtx{\Gamma}(\mtx{f})(\vct{x})
	= \sum_{i=1}^n ( \partial_i \mtx{f}(\vct{x}) )^2
	= 4 \sum_{i=1}^n \left( \sum_{j=1}^n x_j \mtx{A}_{ij} \right)^2.
$$
Evidently, $\mtx{\Gamma}(\mtx{f})$ is unbounded,
so Theorem~\ref{thm:main} does not apply.
But Theorem~\ref{thm:poly-moment} yields
\begin{equation} \label{eqn:gauss-chaos-bd}
\big( \E_{\gamma_n} \trace \abs{\mtx{f}}^{2q} \big)^{1/(2q)}
	\leq \sqrt{8 q^2} \cdot \left(
	\E_{\gamma_n} \trace \left[ \sum_{i=1}^n \left( \sum_{j=1}^n X_j \mtx{A}_{ij} \right)^2 \right]^q \right)^{1/(2q)}.
\end{equation}
We have used the fact that the Poincar{\'e} constant of the OU process is $\alpha = 1$.
Further bounds can be obtained by applying Theorem~\ref{thm:poly-moment} repeatedly.
In the matrix setting, there are obstacles that prevent us from simplifying~\eqref{eqn:gauss-chaos-bd}
completely (related to the fact that the partial transpose operator is not completely bounded).

In the scalar case $d = 1$, we can obtain more transparent conclusions.  Consider the
scalar-valued Gaussian chaos
$$
f(\mtx{X}) = \sum_{i,j=1}^n X_i X_j a_{ij}.
$$
The most satisfying outcome takes place when $\mtx{A} = [ a_{ij} ]$ is positive semidefinite.  
In this case, the result~\eqref{eqn:gauss-chaos-bd} implies that
$$
\begin{aligned}
\big( \E_{\gamma_n} \abs{f}^{2q} \big)^{1/(2q)}
	&\leq \sqrt{8q^2} \cdot \left( \E_{\gamma_n} \left[ \sum_{i=1}^n \left( \sum_{j=1}^n X_j a_{ij} \right)^2 \right]^q \right)^{1/(2q)} \\
	&= \sqrt{8q^2} \cdot \left( \E_{\gamma_n} \left[ \sum_{j,k=1}^n X_j X_k (\mtx{A}^2)_{jk} \right]^q \right)^{1/(2q)} \\
	&\leq \sqrt{8 q^2 \norm{\mtx{A}}} \cdot \big(\E_{\gamma_n} \abs{f}^q\big)^{1/2q}
	\leq \sqrt{8 q^2 \norm{\mtx{A}}} \cdot \big(\E_{\gamma_n} \abs{f}^{2q} \big)^{1/4q}. 
\end{aligned}
$$
Solving, we obtain the bound
$$
\big( \E_{\gamma_n} \abs{f}^{2q} \big)^{1/(2q)}
	\leq 8 q^2 \norm{\mtx{A}}.
$$
This result gives a suboptimal estimate for large moments of the Gaussian chaos;
as $q \to \infty$, the moments should grow in proportion to $q \norm{\mtx{A}}$ rather than $q^2 \norm{\mtx{A}}$.
For example, see~\cite[Cor.~3.9]{LT91:Probability-Banach}.
\end{example}

\subsection{Proof of Theorem~\ref{thm:poly-moment}}

For a parameter $q = 1$ or $q \geq 1.5$, we wish to estimate the Schatten $2q$-norm
of a function $\mtx{f} : \Omega \to \Sym_d$ that satisfies $\E_{\mu} \mtx{f} = \mtx{0}$.
The argument has the same structure as Theorem~\ref{thm:exp-moment}.

First, we symmetrize.  Let $Z,Z' \in \Omega$ be independent random variables,
each with distribution $\mu$.  Jensen's inequality implies that
\begin{equation} \label{eqn:powf-bd}
\E \trace \abs{ \mtx{f} }^{2q}
	\leq \E \trace \abs{ \mtx{f}(Z) - \mtx{f}(Z') }^{2q}
	=: \E \trace \abs{\mtx{g}(Z,Z')}^{2q}.
\end{equation}
Since $\abs{\cdot}^{2q}$ is convex, the function $\trace \abs{\cdot}^{2q}$
is also convex~\cite[Prop.~2]{Pet94:Survey-Certain}.

Define the signed moment function $\phi(s) := \sgn(s) \cdot \abs{s}^{q}$,
which is odd.  Note that its squared derivative $\psi(s) := (\phi'(s))^2 = q^2 \abs{s}^{2(q-1)}$ is convex. 
Since $\mtx{g}(z,z') = \mtx{f}(z) - \mtx{f}(z')$ is antisymmetric,
\begin{align*}
\E_{\mu \otimes \mu} \trace \abs{\mtx{g}}^{2q}
	&= \E_{\mu \otimes \mu} \trace \phi(\mtx{g})^2 \\
	&= \E_{\mu \otimes \mu} \trace \phi(\mtx{g})^2 - \trace \big[ \big( \E_{\mu \otimes \mu} \phi(\mtx{g}) \big)^2 \big]
	= \trace \Var_{\mu \otimes \mu}[ \phi(\mtx{g}) ].
\end{align*}
Apply the bivariate trace Poincar{\'e} inequality, Proposition~\ref{prop:poincare-subadd}:
$$
\trace \Var_{\mu \otimes \mu}[ \phi(\mtx{g}) ]
	\leq \alpha \cdot \trace \dirform( \phi(\mtx{g}) )
	\leq \alpha q^2 \cdot \E_{\mu \otimes \mu} \trace\big[ \mtx{\Gamma}(\mtx{g}) \, \psi(\mtx{g}) \big].
$$
The second bound is the chain rule inequality, Proposition~\ref{prop:chain-rule}.
In summary,
\begin{equation} \label{eqn:q-to-q-1}
\E_{\mu \otimes \mu} \trace \abs{\mtx{g}}^{2q}
	\leq \alpha q^2 \cdot \E_{\mu \otimes \mu} \trace \big[\mtx{\Gamma}(\mtx{g}) \abs{\mtx{g}}^{2(q-1)} \big].
\end{equation}
This formula allow us to perform a moment comparison.

To isolate the carr{\'e} du champ $\mtx{\Gamma}(\vct{g})$, invoke
H{\"o}lder's inequality for the Schatten norms: $$
\E_{\mu \otimes \mu} \trace \abs{\mtx{g}}^{2q}
	\leq \alpha q^2 \big( \E_{\mu \otimes \mu} \trace \mtx{\Gamma}(\mtx{g})^q \big)^{1/q}
	 \big( \E_{\mu \otimes \mu} \trace \abs{\mtx{g}}^{2q} \big)^{(q-1)/q}.
$$
Rearrange the last display, and use the initial bound~\eqref{eqn:powf-bd} to arrive at
$$
\big( \E_{\mu \otimes \mu} \trace \abs{\mtx{f}}^{2q} \big)^{1/(2q)}
	\leq \big( \E_{\mu \otimes \mu} \trace \abs{\mtx{g}}^{2q} \big)^{1/(2q)}
	\leq \sqrt{\alpha q^2} \cdot \big( \E_{\mu \otimes \mu} \trace \mtx{\Gamma}(\mtx{g})^q \big)^{1/(2q)}.
$$
To finish the proof of~\eqref{eqn:poly-moment}, recall the expression~\eqref{eqn:bivar-carre-sym}
for the carr{\'e} du champ $\mtx{\Gamma}(\mtx{g})$.  Thus,
$$
\big( \E_{\mu \otimes \mu} \trace \mtx{\Gamma}(\mtx{g})^q \big)^{1/q}
	= \big( \E_{\mu \otimes \mu} \trace \abs{ \mtx{\Gamma}(\mtx{f})(z) + \mtx{\Gamma}(\mtx{f})(z') }^q \big)^{1/q}
	\leq 2 \big( \E_{\mu} \trace \mtx{\Gamma}(\mtx{f})^q  \big)^{1/q}.
$$
This point follows from the triangle inequality. The argument is complete.

\subsection{A variant of the argument}
\label{sec:poly-var}

The \emph{intrinsic dimension} of a positive-semidefinite matrix is $$
\intdim(\mtx{A}) := \frac{\trace(\mtx{A})}{\norm{\mtx{A}}}
\quad\text{for $\mtx{A} \in \Sym_d^+$.}
$$
We also set $\intdim(\mtx{0}) = 0$.
For a nonzero matrix $\mtx{A}$, the intrinsic dimension
satisfies $1 \leq \intdim(\mtx{A}) \leq \rank(\mtx{A})$.
It can be interpreted as a continuous measure of the rank.

Suppose that $q$ is a natural number.
If we use the uniform bound~\eqref{eqn:vg} for the carr{\'e} du champ
instead of H{\"o}lder's inequality, we can apply the bound~\eqref{eqn:q-to-q-1}
iteratively to obtain
$$
\E_{\mu \otimes \mu} \trace \abs{\mtx{g}}^{2q}
	\leq \intdim(\dirform(\mtx{g})) \cdot \alpha^q q! \cdot v_{\mtx{g}}^q.
$$
The latter estimate improves over the uniform inequality that follows from
Theorem~\ref{thm:poly-moment} and~\eqref{eqn:poly-unif-1}.

\section*{Acknowledgments}

Ramon Van Handel offered valuable feedback on a preliminary version of this work,
and we are grateful to him for the proof of Proposition~\ref{prop:equiv}.
DH was funded by NSF grants DMS-1907977 and DMS-1912654.
JAT gratefully acknowledges funding from ONR awards N00014-17-12146 and N00014-18-12363,
and he would like to thank his family for their support in these difficult times.

\bibliographystyle{myalpha}
\newcommand{\etalchar}[1]{$^{#1}$}

\end{document}